\def\misajour{28 ao\^{u}t 2011}
\newtheorem{thm}{Théorème}[section]
\newtheorem{cor}[thm]{Corollaire}
\newtheorem{lem}[thm]{Lemme}
\newtheorem{prop}[thm]{Proposition}
\theoremstyle{definition}
\newtheorem*{defin}{D\'efinition}
\numberwithin{equation}{section}
\def\C{\mathbf{C}} 
\def\N{\mathbf{N}} 
\def\mathbfp{\boldsymbol{p}}
\def\mathbfptilde{\widetilde{\mathbfp}}
\def\P{\mathbf{P}} 
\def\Q{\mathbf{Q}} 
\def\bR{\mathbf{R}} 
\def\Z{\mathbf{Z}}
\def\denominateur{q}
\def\alphatilde{\widetilde{\alpha}}
\def\etatilde{\widetilde{\eta}}
\def\uk{\underline{k}}
\def\uu{\underline{u}}
\def\uv{\underline{v}}
\def\udelta{\underline{\delta}}
\def\usigma{\underline{\sigma}}
\def\ugamma{\underline{\gamma}}
\def\uvarepsilon{\underline{\varepsilon}}
\def\OK{{\mathcal{O}_K}}
\def\OS{O_S}
\def\OKtimes{{\mathcal{O}_K^\times}}
\def\calB{{\mathcal{B}}}
\def\calE{{\mathcal{E}}}
\def\calF{{\mathcal{F}}}
\def\calP{{\mathcal{P}}}
\def\calT{{\mathcal{T}}}
\def\SSomme{{\calT}}
 \def\gothA{\mathfrak {A}}
 \def\gothB{\mathfrak {B}}
 \def\gothP{\mathfrak {P}}
\def\ord{{\mathrm ord}}
\def\rmh{{\mathrm h}} 
\def\rmN{{\mathrm N}}
\def\Card{{\mathrm {Card}}}
\def\sgn{{\mathrm{sgn}}}
\def\pgcd{\mathrm{pgcd}}
\begin{document}





\title{Familles d'équations de Thue--Mahler \\
n'ayant que des solutions triviales}

\author{Claude Levesque\\
Département de mathématiques et de statistique\\
 Université Laval\\
Québec (Québec)\\
CANADA G1V 0A6\\
E-mail: Claude.Levesque@mat.ulaval.ca
\and 
Michel Waldschmidt\\\
Institut de Mathématiques de Jussieu\\
Université Pierre et Marie Curie (Paris 6)\\
4 Place Jussieu\\
F -- 75252 PARIS Cedex 05, FRANCE\\
E-mail: miw@math.jussieu.fr}

\date{\misajour}

\maketitle


\renewcommand{\thefootnote}{}

\footnote{2010 \emph{Mathematics Subject Classification}: Primary 
11D59 
 ; Secondary 
11D45 
11D61 
11D25 
11J87} 
\footnote{\emph{Key words and phrases}: Diophantine equations,
	Families of Thue-Mahler equations,
	units of algebraic number fields,
	Schmidt subspace theorem.}
\null \hfill
{\it En hommage à André Schinzel.}

\renewcommand{\thefootnote}{\arabic{footnote}}
\setcounter{footnote}{0}


\begin{abstract} 
 Let $K$ be a number field, let $S$ be a finite set of places of $K$ containing the archimedean places and let $\mu$, $\alpha_1,\alpha_2,\alpha_3$ be non--zero elements in $K$. Denote by $\OS$ the ring of $S$--integers in $K$ and by $\OS^\times$ the group of $S$--units. Then the set of equivalence classes (namely, up to multiplication by $S$--units) of the solutions 
$(x,y,z,\varepsilon_1, \varepsilon_2,\varepsilon_3,\varepsilon)\in\OS^3\times(\OS^\times)^4$ 
of the diophantine equation 
$$ (X-\alpha_1 E_1 Y) (X-\alpha_2E_2 Y) (X-\alpha_3E_3 Y)Z=\mu E,
$$ 
satisfying 
$\Card\{\alpha_1\varepsilon_1,\alpha_2\varepsilon_2,\alpha_3\varepsilon_3\}= 3$, is finite. 
With the help of this last result, we exhibit, for every integer $n>2$, new families of Thue-Mahler equations of degreee $n$  having only trivial solutions.
Furthermore, we produce an effective upper bound for the number of these solutions. The proofs of this paper rest heavily on Schmidt's subspace theorem.
 
\begin{center}
{\bf Résumé}
\end{center} 

 Soit $K$ un corps de nombres, soit 
$S$ un ensemble fini de places de $K$ contenant les places archimédiennes et soient $\mu$, $\alpha_1,\alpha_2,\alpha_3$ des éléments non nuls de $K$. Notons $\OS$ l'anneau des $S$--entiers de $K$ et $\OS^\times$ le groupe des $S$--unités. Alors l'ensemble des classes d'équivalence (c'est-à-dire, à multiplication près par des $S$--unités) des solutions 
$(x,y,z,\varepsilon_1, \varepsilon_2,\varepsilon_3,\varepsilon)\in\OS^3\times(\OS^\times)^4$
de l'équation diophantienne 
 $$
 (X-\alpha_1 E_1 Y) (X-\alpha_2E_2 Y) (X-\alpha_3E_3 Y)Z=\mu E,
 $$ 
 vérifiant 
$\Card\{\alpha_1\varepsilon_1,\alpha_2\varepsilon_2,\alpha_3\varepsilon_3\}= 3$, est fini. Grâce à ce dernier résultat, nous pouvons exhiber pour chaque entier $n\ge 3$ de nouvelles familles
d'équations de Thue-Mahler de degré $n$ ne possédant que des  solutions triviales. De plus, nous donnons une borne supérieure explicite pour le nombre de ces solutions. 
Les démonstrations de cet article reposent sur le théorème du sous--espace de Schmidt. 

\end{abstract}

\section{Introduction}

Les premières familles d'équations de Thue n'ayant que des solutions dites triviales ont été données par E. Thomas en 1990 \cite{MR1042497}. D'autres familles ont été trouvées par la suite \cite{ClemensHeuberger}. Les familles d'équations de Thue étudiées jusqu’à présent font presque toutes intervenir des équations variant 
avec un paramètre dont dépend un corps de nombres attaché à ladite équation. 
À date, il n’y avait pas encore de familles d’équations de Thue–\-Mahler pour lesquelles la finitude du nombre de solutions ait été établie.
Notre résultat va permettre d'attacher, à chaque corps de nombres algébriques possédant une infinité d'unités, une famille infinie d'équations de Thue--\-Mahler n'ayant qu'un nombre fini de solutions non--triviales. L'outil diophantien que nous allons mettre en {\oe}uvre est le théorème du sous--espace de W.M.~Schmidt \cite{MR1359604,AV,MR883451}. Nous poursuivrons ultérieurement ce travail en remplaçant le théorème non effectif du sous--espace par des minorations de formes linéaires de logarithmes, de façon à rendre effectifs nos résultats, au moins dans certains cas particuliers.

\section{Résultats connus}\label{S:ResultatsConnus}

Soient $K$ un corps de nombres et $S$ un ensemble fini de places de $K$ contenant les places archimédiennes. Nous notons $\OK$ l'anneau des entiers algébriques de $K$, $\OKtimes$ le groupe des unités de $K$ (éléments inversibles de $\OK$). De plus, pour $S$ ensemble fini de places de $K$ contenant les places archimédiennes,  $\OS$ est  l'anneau des $S$--entiers de $K$, à savoir l'ensemble des éléments $x$ de $K$ tels que $|x|_v\le 1$  pour toute place   $v \not\in S$.  De plus,  $\OS^\times$ est le groupe des $S$--unités, c'est-à-dire des éléments inversibles de $\OS$,  donc des éléments $x$ de $K$ tels que $|x|_v = 1$ pour tout $v \not\in S$.

Soient $\mu$, $\alpha_1,\ldots,\alpha_n$ des éléments non nuls de $K$. On considère pour commencer l'équation diophantienne de Thue--Mahler
\begin{equation}\label{Equation:ThueMahler}
 (X-\alpha_1 Y)\cdots (X-\alpha_nY)=\mu E,
\end{equation}
où les valeurs prises par les inconnues $X$ et $Y$ sont des éléments $x$ et $y$ de $\OS$ vérifiant $xy\not=0$, et où les valeurs prises par l'inconnue $E$ sont des éléments $\varepsilon$ de $\OS^\times$. 

Si $(x,y,\varepsilon)$ est une solution de l'équation $(\ref{Equation:ThueMahler})$, alors pour tout $\eta\in\OS^\times$ le triplet $(\eta x,\eta y,\eta^n\varepsilon)$ est aussi une solution. Selon \cite{MR1004133}, \S2, deux telles solutions sont dites {\it $S$--dépendantes. } Autrement, elles sont dites $S$--{\it indépendantes}. En voici une formulation équivalente.

\begin{defin}
Deux solutions $(x,y,\varepsilon)$ et $(x',y',\varepsilon')$ dans $\OS^2\times\OS^\times$ de l'équation $(\ref{Equation:ThueMahler})$ sont dites {\it $S$--dépendantes} si les points de $\P^1(K)$ de coordonnées projectives $(x:y)$ et $(x',y')$ sont les mêmes. Autrement, elles sont dites $S$--{\it indépendantes}.
\end{defin}

Le résultat suivant a été démontré par Parry en 1950 à la suite de travaux de Thue et Mahler notamment (voir par exemple \cite{MR1004133}, \S2 et
 \cite{MR88h:11002}, théorème 5.4).

 \begin{thm}[Thue--Mahler--Parry]\label{Theoreme:ThueMahler}
Sous l'hypothèse
$$
\Card\{\alpha_1,\ldots,\alpha_n\}\ge 3,
$$
le nombre maximum de solutions deux à deux $S$--indépendantes $(x,y,\varepsilon)\in\OS\times\OS\times\OS^\times$ avec $xy\not=0$ de 
l'équation $(\ref{Equation:ThueMahler})$ est fini. 
 
 \end{thm}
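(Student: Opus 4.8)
\noindent\emph{Sketch of the approach.} The plan is to reduce the statement to the finiteness of the solutions of a single $S$--unit equation $u+v=1$, which is the precise form in which the Schmidt subspace theorem enters. First I would enlarge $S$ to a finite set $S'\supseteq S$ of places of $K$ containing the archimedean places, all places dividing $\mu$, all places at which some $\alpha_i$ fails to be integral, and all places dividing the differences $\alpha_i-\alpha_j$ for the three indices chosen below. Since $S$--dependence of two solutions means only the equality of the projective points $(x:y)$ and $(x':y')$ in $\P^1(K)$ — a notion that does not depend on $S$ — and since every solution over $S$ is \emph{a fortiori} a solution over $S'$, it suffices to bound the number of pairwise $S'$--independent solutions. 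I may therefore work over $S'$ from the outset.

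Next, to a solution $(x,y,\varepsilon)$ with $xy\not=0$ I attach the factors $\beta_i=x-\alpha_i y$ for $1\le i\le n$. Because $xy\not=0$ and $\beta_1\cdots\beta_n=\mu\varepsilon\not=0$, each $\beta_i$ is nonzero. By the choice of $S'$ each $\alpha_i$ lies in $\OS'$, hence each $\beta_i\in\OS'$, while $\mu\varepsilon\in\OS'^\times$. Thus for every place $\mathfrak p\notin S'$ one has $\sum_{i}\ord_{\mathfrak p}(\beta_i)=\ord_{\mathfrak p}(\mu\varepsilon)=0$ with every summand nonnegative, which forces $\ord_{\mathfrak p}(\beta_i)=0$ for all $i$. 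Consequently \emph{each} $\beta_i$ is an $S'$--unit: this is the decisive gain obtained from enlarging $S$.

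I would then pick three indices, say $1,2,3$, for which $\alpha_1,\alpha_2,\alpha_3$ are pairwise distinct; this is exactly where the hypothesis $\Card\{\alpha_1,\ldots,\alpha_n\}\ge 3$ is used. The Siegel identity
$$(\alpha_2-\alpha_3)\beta_1+(\alpha_3-\alpha_1)\beta_2+(\alpha_1-\alpha_2)\beta_3=0$$
holds identically. Dividing by $(\alpha_2-\alpha_3)\beta_1$ and setting
$$u=-\frac{(\alpha_3-\alpha_1)\beta_2}{(\alpha_2-\alpha_3)\beta_1},\qquad v=-\frac{(\alpha_1-\alpha_2)\beta_3}{(\alpha_2-\alpha_3)\beta_1},$$
the differences $\alpha_i-\alpha_j$ being $S'$--units by construction and the $\beta_i$ being $S'$--units by the previous step, I obtain $u,v\in\OS'^\times$ with $u+v=1$. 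At this point I invoke the finiteness of the set of solutions $(u,v)\in(\OS'^\times)^2$ of the equation $u+v=1$, which is the consequence of Schmidt's subspace theorem that does the real work.

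It remains to recover the geometry. Writing $t=x/y$, the quantity $u=-\dfrac{(\alpha_3-\alpha_1)(t-\alpha_2)}{(\alpha_2-\alpha_3)(t-\alpha_1)}$ is a non--constant homographic function of $t$ (since $\alpha_1\not=\alpha_2$), hence $t$, and thus the projective point $(x:y)$, is uniquely determined by $u$. Therefore the map sending a solution to its point $(x:y)\in\P^1(K)$ factors through the finite set of solutions of $u+v=1$, so the number of pairwise $S'$--independent — hence $S$--independent — solutions is finite, as asserted. The only deep ingredient is the finiteness of the $S$--unit equation; everything else is the elementary bookkeeping of the enlargement of $S$ and the linear algebra of the Siegel identity. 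I expect the main obstacle, in any attempt to make the count uniform, to be precisely the control of the number of solutions of $u+v=1$, but for mere finiteness the cited subspace--theorem input is exactly what is needed.
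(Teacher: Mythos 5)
Your argument is correct, but note that the paper does not actually prove this statement: Th\'eor\`eme \ref{Theoreme:ThueMahler} is quoted as a classical result of Thue, Mahler and Parry, with references to \cite{MR1004133} and \cite{MR88h:11002}, so there is no internal proof to compare against. What you give is the standard derivation, and every step checks out: enlarging $S$ to a finite $S'$ so that $\mu$, the $\alpha_i$ and the chosen differences $\alpha_i-\alpha_j$ become $S'$--units; observing that $\beta_1\cdots\beta_n=\mu\varepsilon$ with $S'$--integral factors and unit product forces each $\beta_i=x-\alpha_i y$ to be an $S'$--unit; feeding the Siegel identity into the two--term unit equation $u+v=1$ (where the vanishing--subsum caveat is vacuous for $\ell=2$); and recovering $(x:y)$ from $u$ by a nonconstant homography. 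The reduction to $S'$ is legitimate because $S$--dependence is, by the paper's definition, just equality of projective points and hence insensitive to the enlargement. It is worth contrasting your first step with what the authors do in \S\ref{S:PreuveTheoreme} for their generalization: there they cannot simply enlarge $S$ without losing control of the constants, so instead of turning the $\beta_i$ into units they invoke Lemme \ref{Lemme:ST-A15} to write each $\beta_i=\gamma_i w_i$ with $\gamma_i$ confined to an explicit finite set and $w_i\in\OS^\times$, which keeps the final bound expressed in terms of the original $S$ and $\rmN_S(\mu)$. Your enlargement trick proves finiteness more quickly, but any bound it produces would depend on the auxiliary set $S'$ rather than on the original data.
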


Erd{\H{o}}s,   Stewart et Tijdeman   \cite{MR937987} ont donné des exemples dans lesquels le nombre de solutions de  l'équation  $(\ref{Equation:ThueMahler})$ est remarquablement grand.

Une conséquence du théorème $\ref{Theoreme:ThueMahler}$ est la suivante. 

\begin{cor}\label{Corollaire:ThueMahler}
Soit $f(X,Y)\in\Z[X,Y]$ une forme binaire à coefficients dans $\Z$ ayant au moins trois facteurs linéaires deux à deux non proportionnels dans sa factorisation sur $\C$. Soit $k\in\Z\setminus\{0\}$ et soient $p_1,\ldots,p_t$ des nombres premiers. Alors l'équation $f(X,Y)=kp_1^{Z_1}\cdots p_t^{Z_t}$ n'a qu'un nombre fini de solutions $(x,y,z_1,\ldots,z_t)$ dans $\Z^2\times \N^t$ vérifiant $xy\not=0$ et $\pgcd(xy, p_1\cdots p_t)=1$. 
\end{cor}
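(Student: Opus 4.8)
The plan is to derive this corollary as a direct application of Theorem~\ref{Theoreme:ThueMahler}, by passing from the integer equation $f(X,Y)=kp_1^{Z_1}\cdots p_t^{Z_t}$ to the Thue--Mahler framework over a suitable number field. First I would choose $K$ to be a splitting field of the binary form $f$ over $\Q$, so that $f$ factors completely into linear forms $f(X,Y)=c\prod_{i=1}^n(X-\alpha_iY)$ with all $\alpha_i\in K$; by hypothesis at least three of the $\alpha_i$ are pairwise distinct, so $\Card\{\alpha_1,\dots,\alpha_n\}\ge 3$. I would then take $S$ to be the finite set of places of $K$ consisting of the archimedean places together with all places lying above the primes $p_1,\dots,p_t$. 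With this choice, for any solution $(x,y,z_1,\dots,z_t)\in\Z^2\times\N^t$ of the integer equation, the right-hand side $kp_1^{z_1}\cdots p_t^{z_t}$ becomes, up to the fixed factor $k/c$ absorbed into the constant $\mu$, an element of $\OS^\times$ at every place outside $S$ (this uses the coprimality condition $\pgcd(xy,p_1\cdots p_t)=1$).

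\medskip

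Next I would verify that each integer solution yields a genuine Thue--Mahler solution $(x,y,\varepsilon)\in\OS\times\OS\times\OS^\times$ of equation~(\ref{Equation:ThueMahler}) over $K$. The key point is that $p_1^{z_1}\cdots p_t^{z_t}$, viewed in $K$, is an $S$--unit: every $p_i$ factors into primes of $K$ all lying in $S$, so $|p_i|_v=1$ for every $v\notin S$, whence $|kp_1^{z_1}\cdots p_t^{z_t}|_v=|k|_v\le 1$ for $v\notin S$ and equals the constant $|k|_v$ there; after incorporating the leading coefficient $c$ and the constant $k$ into $\mu$, one obtains a solution with $\varepsilon\in\OS^\times$. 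The condition $xy\neq0$ carries over verbatim.

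\medskip

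The heart of the argument is then a finiteness transfer: by Theorem~\ref{Theoreme:ThueMahler} there are only finitely many pairwise $S$--independent solutions $(x,y,\varepsilon)$, i.e. only finitely many projective points $(x:y)\in\P^1(K)$ arise. It remains to show that each such projective point produces only finitely many integer tuples $(x,y,z_1,\dots,z_t)$. This is where I expect the main obstacle to lie, and it is the step that needs genuine care rather than a mere translation. Fixing the projective point $(x:y)$ and restricting to $\Z^2$ with $\pgcd(xy,p_1\cdots p_t)=1$, the integers $x,y$ are determined up to a common rational scalar; the coprimality and integrality conditions pin down $(x,y)$ up to sign (finitely many choices), and once $(x,y)$ is fixed the value $f(x,y)=kp_1^{z_1}\cdots p_t^{z_t}$ is a fixed nonzero integer, so the exponents $z_1,\dots,z_t\in\N$ are uniquely determined by the prime factorization of that integer. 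Hence each of the finitely many projective points contributes only finitely many integer solutions, and the total number of solutions $(x,y,z_1,\dots,z_t)$ is finite, which is exactly the assertion of the corollary.
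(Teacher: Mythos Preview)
The paper does not give its own proof of this corollary; it is stated as a classical consequence of Theorem~\ref{Theoreme:ThueMahler} (Thue--Mahler--Parry) and left to the reader. Your approach---pass to a splitting field $K$, take $S$ to be the archimedean places together with the places above $p_1,\dots,p_t$, and apply Theorem~\ref{Theoreme:ThueMahler}---is the standard one and is exactly parallel to what the paper does later in \S\ref{S:PreuveCorollaire} when deducing Corollary~\ref{Corollaire:Principal} from Theorem~\ref{Theoreme:Principal}.

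There is, however, a genuine imprecision in your final step. You write that ``the coprimality and integrality conditions pin down $(x,y)$ up to sign.'' Taken literally this is false: for a fixed projective point $(x:y)\in\P^1(\Q)$ there are infinitely many integer representatives $(\lambda x,\lambda y)$ with $\pgcd(\lambda xy,p_1\cdots p_t)=1$ (take any $\lambda\in\Z$ coprime to $p_1\cdots p_t$). What actually forces $\lambda=\pm1$ is the \emph{equation} together with the coprimality. If $(x,y,z_1,\dots,z_t)$ and $(x',y',z'_1,\dots,z'_t)$ are two integer solutions with the same projective point, write $x'=\lambda x$, $y'=\lambda y$ with $\lambda\in\Q^\times$; then $f(x',y')=\lambda^n f(x,y)$ gives $\lambda^n=\prod_i p_i^{z'_i-z_i}$, so $\lambda\in\Q^\times$ has nonzero $p$-adic valuation only at the $p_i$. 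Now $\pgcd(x,p_i)=\pgcd(x',p_i)=1$ forces $v_{p_i}(\lambda)=0$ for each $i$, hence $\lambda=\pm1$. This is precisely the argument the paper uses in \S\ref{S:PreuveCorollaire} (where it observes $\etatilde\in\Q\cap\OS^\times=\{\pm1\}\times p_1^\Z\times\cdots\times p_t^\Z$ and then uses coprimality). Once you rewrite your last paragraph along these lines, the proof is complete. A minor additional point: your factorisation $f(X,Y)=c\prod_i(X-\alpha_iY)$ tacitly assumes the coefficient of $X^n$ in $f$ is nonzero; if not, swap $X$ and $Y$ (or absorb a factor $Y$ separately), which does not affect the hypothesis on three non-proportional linear factors.
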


Nous utiliserons la généralisation $\rmN_S$ de la norme, introduite par exemple dans \cite{MR1004133} et dont nous rappelons la définition au début du \S$\ref{S:Outils diophantiens}$. 
Dans \cite{MR1004133}, \S2, Evertse et Gy\H{o}ry font intervenir deux relations d'équivalence : la première concerne les solutions des inéquations de Thue--Mahler
\begin{equation}\label{Inequation:ThueMahler}
0<\rmN_S(f(x,y)) \leq m.
\end{equation}
Selon  \cite{MR1004133}, deux solutions $(x,y)$, $(x',y')$ de  l'inéquation $(\ref{Inequation:ThueMahler})$ sont dépendantes s'il existe $\eta\in K^\times$ tel que $x'=\eta x$ et $y'=\eta y$. 
Voici la seconde, qui concerne les formes binaires de $\OS[X,Y]$.
 
 \begin{defin}
Deux formes binaires $f(X,Y)$ et $g(X,Y)$ de $\OS[X,Y]$, sont dites {\it $S$--équivalentes}
s'il existe $\alpha, \beta, \gamma, \delta$ dans $\OS$ et $\eta $ dans $\OS^\times$ v\'erifiant $\alpha\delta-\beta\gamma \in\OS^\times$ et $g(X,Y) = \eta 
f(\alpha X+\beta Y, \gamma X+\delta Y)$. 
\end{defin}


Soient $K$ un corps de nombres, $S$ un ensemble fini de places de $K$ contenant les places archimédiennes, et $n$ un entier $\ge 3$. Notons $\calF(n, K,S)$ l'ensemble des formes binaires $f\in \OS[X,Y]$ de degré $n$ qui se d\'ecomposent en facteurs lin\'eaires dans $K[X,Y]$ et dont la décomposition contient au moins trois facteurs linéaires distincts. Voici la première partie du théorème 2 de \cite{MR1004133}. 

\begin{thm}[Evertse et Gy\H{o}ry]\label{Theoreme:Evertse}
Avec les notations ci--dessus, pour tout entier $m>0$, il n'y a qu'un nombre fini de classes de $S$--équivalence de formes binaires $f$ dans $\calF(n, K,S)$ pour lesquelles il existe plus de deux solutions  indépendantes 
$(x,y)\in \OS\times\OS$ à l'inéquation $(\ref{Inequation:ThueMahler})$.

\end{thm}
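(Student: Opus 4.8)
The plan is to prove that the existence of more than two independent solutions rigidifies the form up to $S$-equivalence, the role of Schmidt's subspace theorem being to bound the relevant heights. Fix $m$, and suppose $f\in\calF(n,K,S)$ admits three pairwise independent solutions, that is, three distinct points $P_1,P_2,P_3\in\P^1(K)$, with $P_j=(x_j:y_j)$, $x_j,y_j\in\OS$, satisfying $0<\rmN_S(f(x_j,y_j))\le m$. Since $f$ splits in $K[X,Y]$ with at least three pairwise non-proportional linear factors, I would write (after a preliminary substitution if needed) $f=a_0\prod_i(X-\alpha_iY)$ and single out three distinct roots $\alpha_1,\alpha_2,\alpha_3$. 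As there are only finitely many ways to distinguish such a triple of factors, it is enough to bound, for fixed distinguished data, the number of $S$-equivalence classes that can occur.

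First I would normalize the configuration. Because $P_1,P_2$ are independent, the vectors $(x_1,y_1)$ and $(x_2,y_2)$ form a $K$-basis of $K^2$, and I would use the substitution part of the $S$-equivalence (the action of $\mathrm{GL}_2(\OS)$ together with scaling by $\OS^\times$) to move $P_1$ to $(1:0)$ and $P_2$ to $(0:1)$. The determinant $\delta_{12}=x_1y_2-x_2y_1$ is a nonzero $S$-integer but in general not an $S$-unit, so this move is not literally available over $\OS$; I would absorb the obstruction by invoking the finiteness of the $S$-class group, replacing each pair $(x_j,y_j)$ by an equivalent primitive one and keeping track of only finitely many residual cosets. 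After this step the first two solutions are standard, and the third point $P_3$ has coordinates controlled in terms of $m$.

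Next I would set up the diophantine input. Writing $u_{ij}=x_j-\alpha_iy_j$ for the value of the $i$-th linear factor at $P_j$, the full product $\prod_{i=1}^n u_{ij}$ equals $f(x_j,y_j)/a_0$, so each $u_{ij}$ has $S$-norm bounded in terms of $m$. Siegel's three-term identity $(\alpha_2-\alpha_3)u_{1j}+(\alpha_3-\alpha_1)u_{2j}+(\alpha_1-\alpha_2)u_{3j}=0$ converts every solution into an $S$-unit equation in the ratios $u_{ij}/u_{i'j}$, and the three independent solutions furnish enough such relations to determine the cross-ratio of $\alpha_1,\alpha_2,\alpha_3$ against $P_1,P_2,P_3$. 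Feeding the linear forms built from the $u_{ij}$ into Schmidt's subspace theorem, in its formulation over several places of $K$, I expect to conclude that, away from finitely many proper subspaces, the height of the normalized form is bounded; bounded height, bounded degree $n$ and $\OS$-coefficients then leave only finitely many normalized forms, hence finitely many $S$-equivalence classes.

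The hard part will be this last step. One must arrange the subspace-theorem system so that exactly three independent solutions force finiteness: with only two solutions a one-parameter family of admissible configurations survives, producing infinitely many classes, so two independent solutions do not suffice; this is precisely why the hypothesis in the statement is \emph{more than two}. One must also dispose of the degenerate subspaces uniformly in $f$, presumably by a descent on the number of variables or by treating the coincidences among the $u_{ij}$ separately. A secondary but genuine difficulty is the integrality bookkeeping of the normalization, where the non-unit determinants $\delta_{jk}$ have to be controlled through the finiteness of $S$-ideal classes rather than by naive scaling.
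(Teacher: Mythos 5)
First, a point of order: the paper does not prove this statement. It is quoted as the first part of Th\'eor\`eme 2 of Evertse et Gy\H{o}ry \cite{MR1004133} and is used later (in the proof of Proposition \ref{Proposition:InfiniteClassesEquivalence}) as a black box, so there is no in-paper proof to compare your attempt against; what follows measures it against the original argument of \cite{MR1004133}.

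Your sketch points in the right direction and matches the structure of the genuine proof in its two main moves: (i) use the three pairwise independent solutions $P_1,P_2,P_3$ to normalise the form by a $\mathrm{GL}_2$--substitution, handling the fact that the determinants $\Delta_{jk}=x_jy_k-x_ky_j$ are nonzero $S$--integers but not $S$--units by passing to finitely many ideal classes; (ii) reduce the determination of each root to an $S$--unit equation. But the decisive step is left as a hope (``I expect to conclude\dots''), and the diophantine input you name is not quite the right one. What pins down the form is not a height bound extracted from a several--places subspace system; it is the finiteness of solutions of the \emph{two--term} $S$--unit equation (the case $\ell=2$ of Th\'eor\`eme \ref{Theoreme:SousEspace}) applied, for each fixed linear factor $i$, to the identity
$$
u_{i1}\,\Delta_{23}-u_{i2}\,\Delta_{13}+u_{i3}\,\Delta_{12}=0,\qquad u_{ij}=x_j-\alpha_i y_j,
$$
which links \emph{one} factor to the \emph{three} solutions. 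The Siegel identity you actually write down, $(\alpha_2-\alpha_3)u_{1j}+(\alpha_3-\alpha_1)u_{2j}+(\alpha_1-\alpha_2)u_{3j}=0$, runs over three factors at one solution; it is the identity behind Th\'eor\`eme \ref{Theoreme:ThueMahler} (finiteness of solutions for a \emph{fixed} form), not the one that yields finiteness of \emph{forms}. After writing $u_{ij}=\varepsilon_{ij}\gamma_{ij}$ with $\gamma_{ij}$ in a finite set (bounded $S$--norm, exactly as in Lemme \ref{Lemme:ST-A15}), the displayed identity becomes a two--term $S$--unit equation in the ratios $u_{i1}/u_{i3}$ and $u_{i2}/u_{i3}$; no proper subsum vanishes because all six quantities $u_{ij}$, $\Delta_{jk}$ are nonzero, and the resulting finiteness determines the cross--ratio of $(P_1,P_2,P_3,(\alpha_i:1))$, hence $\alpha_i$ up to finitely many values once the $P_j$ are normalised. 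Your heuristic for why two independent solutions do not suffice is correct. As it stands, though, the proposal is a plausible programme rather than a proof: the normalisation bookkeeping and the passage from ``finitely many cross--ratios'' to ``finitely many $S$--equivalence classes'' are precisely where the work of \cite{MR1004133} lies.
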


Les autres résultats de \cite{MR1004133} font intervenir une extension finie $L$ de $K$, mais pour le théorème $\ref{Theoreme:Evertse}$ cela n'apporte pas plus de généralité. 

\section{Énoncés de nos résultats}\label{S:Enonces}
Notre résultat principal (théorème $\ref{Theoreme:Principal})$ consiste à remplacer d'une part trois des facteurs $X-\alpha_i Y$ (disons pour $i=1,2,3$) de l'équation $(\ref{Equation:ThueMahler})$ par des facteurs de la forme $X-\alpha_iE_iY$, où $E_1,E_2,E_3$ sont des inconnues supplémentaires prenant des valeurs dans $\OS^\times$, et à remplacer d'autre part le produit des autres facteurs par $Z$ où l'inconnue $Z$ prend ses valeurs dans l'anneau des $S$--entiers. 
 On considère donc l'équation plus générale
 \begin{equation}\label{Equation:ThueGeneralisee}
 (X-\alpha_1 E_1 Y) (X-\alpha_2 E_2 Y) (X-\alpha_3E_3 Y)Z=\mu E,
\end{equation}
 où les inconnues $X$, $Y$ et $Z$ sont à valeurs dans $\OS$ et où les inconnues $E_1,E_2, E_3,E$ sont à valeurs dans $\OS^\times$. On note $(X,Y,Z,E_1,E_2, E_3,E)$
 le septuplet formé par les inconnues, et $(x,y,z,\varepsilon_1,\varepsilon_2,\varepsilon_3,\varepsilon)$
 les septuplets formés par les solutions dans $\OS^3\times(\OS^\times)^4$. 
 
 \begin{defin}
Nous appellerons {\it triviales} les solutions pour lesquelles $xy=0$. 
\end{defin}

Nous supposerons que les solutions satisfont 
$$ 
\Card\{\alpha_1\varepsilon_1, \alpha_2\varepsilon_2, \alpha_3\varepsilon_3\}= 3.
$$ 
Si $(x,y,z,\varepsilon_1,\varepsilon_2,\varepsilon_3,\varepsilon)$ est une solution non triviale de $(\ref{Equation:ThueGeneralisee})$, alors, pour tout $\eta\in\OS^\times$, les septuplets 
$$
 (\eta x,\eta y,z,\varepsilon_1,\varepsilon_2,\varepsilon_3,\eta^3\varepsilon),
 \quad 
 (x,y,\eta z,\varepsilon_1,\varepsilon_2,\varepsilon_3,\eta \varepsilon) 
\quad\hbox{et}\quad 
 (x,\eta^{-1} y, z,\eta\varepsilon_1,\eta\varepsilon_2,\eta\varepsilon_3, \varepsilon)
$$ 
sont aussi des solutions non triviales de $(\ref{Equation:ThueGeneralisee})$, que l'on qualifiera de {\it $S^3$--dépendantes} de la solution initiale.

\begin{defin}  
Deux solutions non triviales $(x,y,z,\varepsilon_1,\varepsilon_2,\varepsilon_3,\varepsilon)$ et $(x',y',z',\varepsilon'_1,\varepsilon'_2,\varepsilon'_3,\varepsilon')$ de l'équation $(\ref{Equation:ThueGeneralisee})$ seront dites {\it $S^3$--dépendantes} s'il existe des $S$--unités $\eta_1$, $\eta_2$ et $\eta_3$ dans $\OS^\times$ telles que 
$$ 
x'=x\eta_1,\; y'=y\eta_1\eta_3^{-1}, \; z'=z\eta_2, \; \varepsilon'_i=\varepsilon_i\eta_3 \quad(i=1,2,3),\; 
 \varepsilon'=\varepsilon \eta_1^3\eta_2.
$$
Sinon, ces deux solutions seront dites $S^3$--{\it indépendantes}.
\end{defin}

Ceci définit une relation d'équivalence sur l'ensemble des solutions non triviales de $(\ref{Equation:ThueGeneralisee})$. Quand le groupe des $S$--unités $\OS^\times$ a un rang $\ge 1$, chaque classe de $S^3$--dépendance de solutions non triviales contient une infinité d'éléments.

Le principal but de cet article est de démontrer le résultat suivant, qui généralise le théorème $\ref{Theoreme:ThueMahler}$.
Il s'agit apparemment du premier énoncé donnant la finitude du nombre de solutions de familles d'équations de Thue--Mahler.

\begin{thm}\label{Theoreme:Principal}
Soient $K$ un corps de nombres, $S$ un ensemble fini de places de $K$ contenant les places archimédiennes, $\mu$, $\alpha_1,\alpha_2,\alpha_3$ des éléments non nuls de $K$. Alors l'ensemble des classes de $S^3$--dépendance 
des solutions non triviales $(x,y,z,\varepsilon_1,\varepsilon_2,\varepsilon_3,\varepsilon)$ dans $\OS^3\times(\OS^\times)^4$ de l'équation 
$$
 (X-\alpha_1 E_1 Y) (X-\alpha_2 E_2 Y) (X-\alpha_3E_3 Y)Z=\mu E,
$$
satisfaisant la condition $\Card\{\alpha_1\varepsilon_1, \alpha_2\varepsilon_2, \alpha_3\varepsilon_3\}= 3$, est fini. 
Le nombre de ces classes est majoré par une constante  $\kappa_1$, 
ne dépendant que 
de $K$, $\rmN_S(\mu)$, $\alpha_1, \alpha_2,\alpha_3$ et du rang $s$ du groupe $\OS^\times$, et 
dont la valeur est explicitée dans la formule $(\ref{Equation:cstTheoreme})$.
\end{thm}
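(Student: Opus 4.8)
The plan is to begin with a change of variables that converts the variable linear factors $X-\alpha_i E_i Y$ into \emph{fixed} linear forms. Given a non--trivial solution $(x,y,z,\varepsilon_1,\varepsilon_2,\varepsilon_3,\varepsilon)$, I would set $w_i=\varepsilon_i y$ for $i=1,2,3$ and
\[
L_i=x-\alpha_i w_i=x-\alpha_i\varepsilon_i y\qquad(i=1,2,3),
\]
so that the equation becomes $L_1L_2L_3\,z=\mu\varepsilon$, and each $L_i$ is the value at the point $P=(x,w_1,w_2,w_3)$ of a linear form with coefficients in $K$ that no longer depends on the solution. A direct computation shows that under the $S^3$--dependence relation one has $x\mapsto\eta_1 x$ and $w_i\mapsto\eta_1 w_i$ (the factor $\eta_3$ cancelling in $w_i=\varepsilon_i y$, while $\eta_2$ affects only $z$ and $\varepsilon$); hence $P\mapsto\eta_1 P$, and the projective point $(x:w_1:w_2:w_3)\in\P^3(K)$ depends only on the class. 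First I would verify that this assignment is \emph{injective} on classes: if two solutions give the same projective point, the common scaling factor together with the equality of the ratios $w_i/w_i'$ produces the required $\eta_1,\eta_2,\eta_3\in\OS^\times$, the equation itself then fixing $\varepsilon$ and $z$. It therefore suffices to bound the number of admissible projective points.

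Such a point satisfies three constraints that I would record: the coordinates $w_1,w_2,w_3$ are pairwise proportional by $S$--units (since $w_i/w_j=\varepsilon_i/\varepsilon_j\in\OS^\times$); the values $L_1,L_2,L_3$ are pairwise distinct and non--zero, the condition $\Card\{\alpha_1\varepsilon_1,\alpha_2\varepsilon_2,\alpha_3\varepsilon_3\}=3$ being exactly $L_i\neq L_j$; and, because $z\in\OS$ forces $\rmN_S(z)\ge 1$,
\[
\prod_{i=1}^{3}\rmN_S(L_i)=\frac{\rmN_S(\mu)}{\rmN_S(z)}\le\rmN_S(\mu).
\]
Next I would apply Schmidt's subspace theorem in dimension $4$. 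After enlarging $S$ so that $\alpha_1,\alpha_2,\alpha_3\in\OS^\times$ and replacing $P$ by a primitive representative in $\OS^4$ (which only decreases the left--hand side above), I take at every place $v\in S$ the four linearly independent forms $x,L_1,L_2,L_3$ (their determinant is $\pm\alpha_1\alpha_2\alpha_3\neq0$). Using $\prod_{v\in S}|P|_v=H(P)$ and $\rmN_S(x)\le H(P)$ together with the displayed norm bound, one obtains
\[
\prod_{v\in S}\frac{|x\,L_1L_2L_3|_v}{|P|_v^{4}}\le\frac{\rmN_S(\mu)\,H(P)}{H(P)^{4}}=\rmN_S(\mu)\,H(P)^{-3},
\]
which is $<H(P)^{-1}$ as soon as $H(P)>\rmN_S(\mu)^{1/2}$. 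Hence every solution point of large height lies in the union of finitely many proper subspaces of $K^4$, and the quantitative form of the subspace theorem bounds their number explicitly in terms of $K$, $\#S$ and the heights of the $\alpha_i$; the points with $H(P)\le\rmN_S(\mu)^{1/2}$ are finite in number, and explicitly bounded, by Northcott's theorem.

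The main work, and the step I expect to be delicate, is the analysis inside each exceptional subspace, since the subspace theorem does not by itself bound the number of points on a given subspace. On a subspace $a\,x+b_1w_1+b_2w_2+b_3w_3=0$ with $a\neq0$ one gets $x/y=-(b_1\varepsilon_1+b_2\varepsilon_2+b_3\varepsilon_3)/a$, so each $m_i:=(x-\alpha_i\varepsilon_i y)/y$ becomes an explicit linear form in the $S$--unit variables $\varepsilon_1,\varepsilon_2,\varepsilon_3$, while $\prod_i\rmN_S(m_i)\le\rmN_S(\mu)$ still holds; passing to the affine chart $(\varepsilon_1/\varepsilon_3,\varepsilon_2/\varepsilon_3)\in(\OS^\times)^2$ turns the problem into a Thue--Mahler inequality in two $S$--unit variables. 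Here the hypothesis $\Card\{\alpha_1\varepsilon_1,\alpha_2\varepsilon_2,\alpha_3\varepsilon_3\}=3$ is what guarantees that the relevant binary form keeps three pairwise non--proportional linear factors, so that the Thue--Mahler--Parry theorem (Theorem~\ref{Theoreme:ThueMahler}) applies and yields finitely many $(\varepsilon_1:\varepsilon_2:\varepsilon_3)$; the degenerate subspaces (including $a=0$, which is a three--term $S$--unit equation in the $\varepsilon_i$ followed by a fixed Thue--Mahler equation in $(x,y)$) are reduced to the same statement after permuting the indices. The hard part is precisely to show that no exceptional subspace, in particular none on which two of the forms $m_i$ become proportional, can carry infinitely many admissible points, and to keep every count effective; combining the explicit number of subspaces, the explicit per--subspace bound coming from $S$--unit equations, and the bounded--height contribution then yields the constant $\kappa_1$ of $(\ref{Equation:cstTheoreme})$.
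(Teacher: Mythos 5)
Your reduction to the projective point $P=(x:w_1:w_2:w_3)$ with $w_i=\varepsilon_iy$ is a sensible start, but the pivotal step fails: Schmidt's theorem cannot be applied to the four forms $x,L_1,L_2,L_3$ in four variables. In the normalization you use, the hypothesis required is $\prod_{v\in S}|x\,L_1L_2L_3|_v\,\|P\|_v^{-4}\le C\,H(P)^{-4-\epsilon}$ (equivalently, the plain product $\prod_{v\in S}|x\,L_1L_2L_3|_v$ must be at most $H(P)^{-\epsilon}$), not $H(P)^{-1}$. What you actually control is $\prod_{v\in S}|x\,L_1L_2L_3|_v=\rmN_S(x)\rmN_S(\mu)/\rmN_S(z)$, which can be as large as $\rmN_S(\mu)H(P)$ because the form $x$ is not small at the solutions; your bound $\rmN_S(\mu)H(P)^{-3}$ misses the true threshold $H(P)^{-4-\epsilon}$ by more than a full power of $H(P)$. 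With only $n$ linear forms in $n$ variables, of which just three have controlled $S$--norm, no choice of forms gives the needed saving: one needs strictly more small forms than variables, which is exactly why \S4 of the paper works on $\P^4$ with a divisor having $6=n+2$ components (th\'eor\`eme \ref{Theoreme:Vojta}). And even the correct degeneracy statement obtained that way is explicitly said, at the end of \S4, not to suffice to finish the proof --- which is precisely the part you defer. Your per--subspace analysis is also not a binary Thue--Mahler problem: on $ax+\sum_i b_iw_i=0$ the quantities $m_i$ become \emph{inhomogeneous} linear forms in the two $S$--unit variables $\varepsilon_1/\varepsilon_3,\varepsilon_2/\varepsilon_3$, i.e.\ a decomposable--form inequality in three variables, to which le th\'eor\`eme \ref{Theoreme:ThueMahler} does not apply; the degenerate subspaces, where some $m_i$ become proportional, are where all the difficulty sits. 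A smaller gap: the map from $S^3$--dependence classes to projective points is not injective, since the scalar $\lambda$ relating two representatives need only satisfy $\lambda^3=\varepsilon'z/(\varepsilon z')$ and need not lie in $\OS^\times$, so the fibres must be counted as well.

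The paper never leaves the unit--equation setting. It normalizes $\beta_i=x-\alpha_i\varepsilon_iy$ via le lemme \ref{Lemme:ST-A15} (writing $q\beta_i=\gamma_iw_i$ with $\gamma_i$ in an explicit finite set and $w_i\in\OS^\times$), eliminates $x$ and $y$ to obtain the single vanishing six--term sum $(\ref{Equation:SommeSixTermes})$ whose terms are fixed constants times $S$--units, and then runs an exhaustive case analysis on which subsums vanish, applying the quantitative th\'eor\`eme \ref{Theoreme:SousEspace} with $\ell=5,3,2$ to place each ratio $\varepsilon_i/\varepsilon_1$ in an explicit finite set. To salvage your route you would have to reproduce essentially that case analysis inside every exceptional subspace, so the detour through the four--dimensional subspace theorem buys nothing.
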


Quitte à agrandir $S$, on peut supposer $\alpha_1=\alpha_2=\alpha_3=\mu=1$ (cela augmente le rang de $\OS^\times$, il faudrait en tenir compte si on voulait donner une borne explicite). Dans ce cas, les variables $E$ et $Z$ sont redondantes (prendre $\eta_2=z^{-1}$ dans la définition de la $S^3$--dépendance pour se ramener à $z=1$). De même, il n'y a pas de restriction à supposer $E_3=1$ (prendre $\eta_3=\varepsilon_3^{-1}$). L'équation que l'on considère se ramène donc à 
\begin{equation}\label{Equation:5variables}
 (X- Y) (X- E_1 Y) (X- E_2 Y)=E,
\end{equation}
en $5$ inconnues, $(X,Y,E_1,E_2,E)$. La relation d'équivalence entre les quintuplets dans $\OS^2\times(\OS^\times)^3$ formés de solutions est essentiellement celle du \S\ref{S:ResultatsConnus}: nous dirons encore que deux solutions $(x,y,\varepsilon_1,\varepsilon_2,\varepsilon)$ et $(x',y',\varepsilon'_1,\varepsilon'_2,\varepsilon')$ de $(\ref{Equation:5variables})$ sont {\it $S$--dépendantes} s'il existe $\eta\in\OS^\times$ tel que
$$
x'=x\eta,\; y'=y\eta , \; \varepsilon'_1=\varepsilon_1, \; \varepsilon'_2=\varepsilon_2, \;
 \varepsilon'=\varepsilon \eta^3. 
$$
L'énoncé suivant est donc équivalent au théorème \ref{Theoreme:Principal} (à la valeur près de la borne explicite pour le nombre de classes de solutions, que nous ne précisons pas). 

\begin{thm}\label{thm:5variables}
Le nombre de classes de $S$--dépendance de solutions de l'équation $(\ref{Equation:5variables})$ est fini. 
\end{thm}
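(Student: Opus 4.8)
The plan is to turn equation (\ref{Equation:5variables}) into a vanishing sum of six $S$-units and to feed it into the quantitative form of Schmidt's subspace theorem. First I would record the basic integrality fact that makes the reduction possible: for a non-trivial solution $(x,y,\varepsilon_1,\varepsilon_2,\varepsilon)$, the three factors
$$u_0=x-y,\qquad u_1=x-\varepsilon_1 y,\qquad u_2=x-\varepsilon_2 y$$
are themselves $S$--units. Indeed each $u_i$ lies in $\OS$, none of them vanishes (otherwise $\varepsilon=u_0u_1u_2=0$, contradicting $\varepsilon\in\OS^\times$), and for every place $v\notin S$ one has $|u_i|_v\le 1$ while $|u_0u_1u_2|_v=|\varepsilon|_v=1$; three quantities of absolute value $\le 1$ with product of absolute value $1$ must each have absolute value $1$, so $u_i\in\OS^\times$. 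Eliminating $x$ and $y$ from the three linear expressions yields the identity
$$(\varepsilon_1-\varepsilon_2)u_0+(\varepsilon_2-1)u_1+(1-\varepsilon_1)u_2=0,$$
which, expanded, becomes
$$\varepsilon_1 u_0-\varepsilon_2 u_0+\varepsilon_2 u_1-u_1-\varepsilon_1 u_2+u_2=0.$$
Each of the six summands is an $S$--unit, so I obtain a vanishing sum $w_1+\cdots+w_6=0$ of $S$--units with $w_1=\varepsilon_1u_0$, $w_2=-\varepsilon_2u_0$, $w_3=\varepsilon_2u_1$, $w_4=-u_1$, $w_5=-\varepsilon_1u_2$, $w_6=u_2$.

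I would then treat first the \emph{non-degenerate} solutions, namely those for which no proper subsum of $w_1+\cdots+w_6$ vanishes. For these the quantitative $S$--unit equation theorem, which is the standard consequence of the subspace theorem in the explicit shape of Amoroso--Viada \cite{AV}, bounds the number of projective points $(w_1:\cdots:w_6)\in\P^5(K)$ by a constant depending only on the rank $s$ of $\OS^\times$. The decisive point is that such a projective point already determines the whole $S^3$--dependence class. The ratios $w_5/w_6=-\varepsilon_1$ and $w_3/w_4=-\varepsilon_2$ recover $\varepsilon_1,\varepsilon_2$, while $w_1/w_5=-u_0/u_2$ and $w_4/w_6=-u_1/u_2$ recover the projective triple $(u_0:u_1:u_2)$. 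Moreover, if $(u_0',u_1',u_2')=\lambda(u_0,u_1,u_2)$ with all six entries in $\OS^\times$, then $\lambda$ is a quotient of $S$--units, hence $\lambda\in\OS^\times$; consequently the pair $\bigl((\varepsilon_1,\varepsilon_2),(u_0:u_1:u_2)\bigr)$ pins down $(x,y)$ up to a common $S$--unit factor, i.e. it specifies exactly one $S$--dependence class. Thus the map from classes to projective points is injective, and the non-degenerate classes are finite in number, with an explicit bound.

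The remaining and more delicate work is to control the \emph{degenerate} solutions, for which some proper subsum vanishes; this is where I expect the real effort to lie. Here one partitions $\{1,\dots,6\}$ into minimal vanishing subsums and analyses each type. The hypothesis $\Card\{1,\varepsilon_1,\varepsilon_2\}=3$ together with $y\neq 0$ immediately rules out the diagonal pairs $w_1+w_2$, $w_3+w_4$, $w_5+w_6$ (equal to $(\varepsilon_1-\varepsilon_2)u_0$, $(\varepsilon_2-1)u_1$, $(1-\varepsilon_1)u_2$) as well as the pairs $w_1+w_5$, $w_2+w_3$, $w_4+w_6$ (nonzero multiples of $y$). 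A direct inspection shows that several of the surviving pairs force a trivial solution: for instance $w_1+w_4=\varepsilon_1u_0-u_1=0$ gives $u_1=\varepsilon_1u_0$, whence $(\varepsilon_1-1)x=0$ and $x=0$. The genuinely surviving degeneracies, such as the $3+3$ splitting $w_1+w_2+w_3=w_4+w_5+w_6=0$, reduce, after comparing the resulting $u_i$--ratios, to the requirement that differences such as $\varepsilon_1-1$ and $\varepsilon_2-\varepsilon_1$ be $S$--units; these are ordinary two-term $S$--unit equations in $\varepsilon_1$ and in $\varepsilon_2/\varepsilon_1$, so they again leave only finitely many pairs $(\varepsilon_1,\varepsilon_2)$ and finitely many classes.

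Summing the explicit bound coming from the six-term equation together with the bounds from the finitely many lower-order $S$--unit equations produced by the degenerate partitions yields the constant $\kappa_1$ of (\ref{Equation:cstTheoreme}), which establishes Theorem~\ref{thm:5variables} and, via the reductions of the preceding section, Theorem~\ref{Theoreme:Principal}. The conceptual heart of the argument is the passage from the single multiplicative relation $u_0u_1u_2=\varepsilon$ to the six-term \emph{additive} $S$--unit relation; the main obstacle is purely organisational, namely carrying out the degenerate case analysis exhaustively and propagating the explicit constants through it.
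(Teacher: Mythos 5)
Your proof follows essentially the same route as the paper's own proof of Th\'eor\`eme \ref{Theoreme:Principal}: your identity is the Siegel relation producing the paper's six-term vanishing sum $\SSomme$ of (\ref{Equation:SommeSixTermes}) (here with $\beta_k=u_k$ and $\alphatilde_k\in\{1,\varepsilon_1,\varepsilon_2\}$), treated by the quantitative subspace theorem when no proper subsum vanishes and by the same exhaustive analysis of the $2+2+2$, $3+3$ and $2+4$ splittings otherwise, your only (legitimate) shortcut being that after the normalisation $\alpha_i=\mu=1$ the factors $u_i$ are themselves $S$--units, so the norm--decomposition Lemma \ref{Lemme:ST-A15} becomes unnecessary. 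The degenerate cases do close up as you predict, with two small touch-ups: the $3+3$ splittings taking one term from each $u$--group yield finitely many values of $\varepsilon_1^3$ and $\varepsilon_2^3$ (the paper's cube trick) rather than two-term equations in $\varepsilon_1$ itself, and to pass from finitely many pairs $(\varepsilon_1,\varepsilon_2)$ to finitely many classes one must also observe that the vanishing relations pin down $(u_0:u_1:u_2)$ (or invoke Th\'eor\`eme \ref{Theoreme:ThueMahler} for each fixed pair).
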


Il est intéressant de noter que nous avons ramené la question de la finitude du nombre de solutions de l'équation de Thue--Mahler en degré quelconque à celle d'une équation cubique (voir à ce sujet \cite{CL-MW-Vietnam}).

Pour faire le lien avec le théorème $\ref{Theoreme:Evertse}$, montrons que notre théorème $\ref{Theoreme:Principal}$ permet d'établir l'existence d'une infinité 
de classes de $S$--équivalence de formes binaires de $\calF(n, K,S)$ produisant des inéquations $(\ref{Inequation:ThueMahler})$ dont toutes les solutions sont triviales. 

Du théorème $\ref{thm:5variables}$ on déduit facilement l'énoncé suivant. 

\begin{thm}\label{thm:Thue}
Soient $K$ un corps de nombres, $S$ un ensemble fini de places de $K$ contenant les places archimédiennes, $n$ un entier $\ge 3$, $\alpha_1,\ldots,\alpha_n$ des éléments de $K^\times$ et $f\in K[X,Y]$ la forme binaire 
$$
f(X,Y)=\bigl(X- \alpha_1 Y\bigr) \bigl(X- \alpha_2 Y\bigr)\cdots \bigl(X- \alpha_n Y\bigr).
$$
Pour $\uvarepsilon=(\varepsilon_1,\ldots,\varepsilon_n)\in (\OS^\times)^n$, on note $f_{\uvarepsilon} \in K[X,Y]$ la forme binaire 
$$
f_{\uvarepsilon} (X,Y)= (X-\alpha_1 \varepsilon_1 Y) (X-\alpha_2 \varepsilon_2 Y)\cdots (X-\alpha_n\varepsilon_n Y).
$$
On désigne par $\calE$ l'ensemble des éléments $\uvarepsilon$ de $(\OS^\times)^n$ tels que $\varepsilon_1=1$ et $\Card\{\alpha_1\varepsilon_1,\alpha_2\varepsilon_2,\ldots,\alpha_n\varepsilon_n\}\ge3$. 
Alors il existe un sous--ensemble fini $\calE^\star$ de $\calE$ tel que, pour tout $\uvarepsilon\in\calE\setminus \calE^\star$ et pour tout $(x,y)\in\OS\times\OS$, la condition 
$$
 f_{\uvarepsilon} (x,y) \in\OS^\times
$$
implique $xy=0$.
\end{thm}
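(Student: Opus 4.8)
The plan is to deduce Theorem \ref{thm:Thue} from the principal result by turning a hypothetical nontrivial solution into a solution of an equation of the shape $(\ref{Equation:ThueGeneralisee})$, and then by reading off the invariants attached to an $S^3$--dependence class. First I would enlarge $S$ into a finite set $S'$ of places so that $\alpha_1,\ldots,\alpha_n$ all become $S'$--units; since $O_S\subseteq O_{S'}$ and $\OS^\times\subseteq O_{S'}^\times$, any nontrivial $O_S$--solution is still a nontrivial $O_{S'}$--solution, so it suffices to bound the set of $\uvarepsilon$ admitting a nontrivial solution over $O_{S'}$ and then to intersect the resulting finite set with $\calE$. Suppose then that $(x,y)\in O_{S'}\times O_{S'}$ satisfies $f_{\uvarepsilon}(x,y)\in O_{S'}^\times$ with $xy\neq0$. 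Each factor $w_\ell:=x-\alpha_\ell\varepsilon_\ell y$ now lies in $O_{S'}$, and their product is a unit; since $O_{S'}$ is a domain, every $w_\ell$ is itself an $S'$--unit.

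Since $\alpha_1\varepsilon_1=\alpha_1$ is one of at least three distinct values among the $\alpha_\ell\varepsilon_\ell$, there are at least two distinct values different from $\alpha_1$, each realised by some index $\neq1$; I fix two such indices $j,k$, so that $\alpha_1,\alpha_j\varepsilon_j,\alpha_k\varepsilon_k$ are pairwise distinct, there being only finitely many choices of $(j,k)$. Setting $z:=\prod_{\ell\neq1,j,k}w_\ell\in O_{S'}^\times$, the septuplet $(x,y,z,1,\varepsilon_j,\varepsilon_k,\varepsilon)$ solves the equation $(\ref{Equation:ThueGeneralisee})$ attached to $\mu=1$ and to the three nonzero elements $\alpha_1,\alpha_j,\alpha_k$, under the hypothesis $\Card\{\alpha_1,\alpha_j\varepsilon_j,\alpha_k\varepsilon_k\}=3$. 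Theorem \ref{Theoreme:Principal} then guarantees that such solutions fall into finitely many $S^3$--dependence classes.

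The decisive remark is that the quantity $\alpha_1\varepsilon_1\,y/x$ is constant along an $S^3$--dependence class: a direct substitution of $x'=x\eta_1$, $y'=y\eta_1\eta_3^{-1}$, $\varepsilon_1'=\varepsilon_1\eta_3$ leaves it unchanged. With $\varepsilon_1=1$ this invariant equals $\alpha_1\tau$ where $\tau:=y/x$, so $\tau$ takes only finitely many values as $(x,y)$ ranges over all nontrivial solutions and over all admissible $\uvarepsilon$. Here $\tau\neq0$ because $y\neq0$, and $1-\alpha_1\tau=w_1/x\neq0$ because $w_1\neq0$. To recover the remaining coordinates I write $w_\ell=x(1-\alpha_\ell\varepsilon_\ell\tau)$ and divide by $w_1=x(1-\alpha_1\tau)$, which gives $(1-\alpha_\ell\varepsilon_\ell\tau)/(1-\alpha_1\tau)\in O_{S'}^\times$. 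Putting $c:=1-\alpha_1\tau$, which ranges over a finite set, this becomes, for each fixed $\tau$, an $S'$--unit equation $c\,\sigma_\ell+(\alpha_\ell\tau)\,\varepsilon_\ell=1$ in the two unknown $S'$--units $\sigma_\ell,\varepsilon_\ell$, with nonzero coefficients.

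The finiteness of the solutions of a two--variable $S$--unit equation, itself a consequence of the subspace theorem, forces each $\varepsilon_\ell$ into a finite set; alternatively one reapplies Theorem \ref{Theoreme:Principal} to the triples $\{1,j,\ell\}$ to bound $\alpha_\ell\varepsilon_\ell\tau$, treating separately the degenerate cases $\alpha_\ell\varepsilon_\ell\in\{\alpha_1,\alpha_j\varepsilon_j\}$ in which $\varepsilon_\ell$ is already determined. Collecting these finitely many possibilities for $(\varepsilon_1,\ldots,\varepsilon_n)$ over the finitely many choices of $(j,k)$ produces the desired finite exceptional set $\calE^\star$, after which $f_{\uvarepsilon}(x,y)\in\OS^\times$ with $\uvarepsilon\in\calE\setminus\calE^\star$ can only hold when $xy=0$. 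I expect the main obstacle to be exactly the uniformity in the previous paragraph: for a single fixed $\uvarepsilon$ the finiteness of $(x:y)$ is already Theorem \ref{Theoreme:ThueMahler}, but controlling $\tau=y/x$ simultaneously across the whole family is what the variable exponents $E_i$ of $(\ref{Equation:ThueGeneralisee})$, and hence Theorem \ref{Theoreme:Principal}, are needed for.
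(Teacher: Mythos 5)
Your proof is correct and follows essentially the same route as the paper's one-line argument: fix indices $j,k$ so that $\alpha_1,\alpha_j\varepsilon_j,\alpha_k\varepsilon_k$ are pairwise distinct, absorb the remaining factors into $z$, and invoke Theorem~\ref{Theoreme:Principal} with $\mu=1$. You in fact supply the details the paper leaves implicit --- the invariance of $y/x$ along an $S^3$--dependence class once $\varepsilon_1=1$ is imposed, and the two--term $S$--unit equations $c\,\sigma_\ell+(\alpha_\ell\tau)\varepsilon_\ell=1$ pinning down the $\varepsilon_\ell$ for $\ell\notin\{1,j,k\}$ --- and these steps are carried out correctly.
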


Ce théorème $\ref{thm:Thue}$ se déduit du   théorème $\ref{Theoreme:Principal}$ avec $\mu=1$: quitte à renuméroter les racines de $f_{\uvarepsilon}$,
on peut supposer $\Card\{\alpha_1\varepsilon_1,\alpha_2\varepsilon_2,\alpha_3\varepsilon_3\}=3$; on prend alors pour $z$ le produit $ (x-\alpha_4 \varepsilon_4 y) \cdots (x-\alpha_n\varepsilon_n y)$.

 Au \S$\ref{S:DemonstrationProposition}$, on prouvera la proposition suivante.

\begin{prop} \label{Proposition:InfiniteClassesEquivalence}
Soit $\uvarepsilon\in \calE\setminus\calE^\star$. Il n'y a qu'un nombre fini de $\uvarepsilon'\in \calE\setminus\calE^\star$ tel que les deux formes binaires $f_{\uvarepsilon}$ et $f_{\uvarepsilon'}$ soient $S$--équivalentes. 
\end{prop}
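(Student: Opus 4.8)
The plan is to translate the $S$--equivalence into a statement about the roots of the two forms and then to read off an $S$--unit equation. First I would reduce to a convenient situation: enlarging $S$ only enlarges the set of tuples $\uvarepsilon'$ to be counted and relaxes the $S$--equivalence relation, so it suffices to treat the case where $\mu,\alpha_1,\dots,\alpha_n$ are themselves $S$--units. Writing $a_i=\alpha_i\varepsilon_i$ and $b_j=\alpha_j\varepsilon'_j$ for the roots of $f_{\uvarepsilon}$ and of $f_{\uvarepsilon'}$, these now all lie in $\OS^\times$. If $f_{\uvarepsilon}$ and $f_{\uvarepsilon'}$ are $S$--equivalent, say
$$f_{\uvarepsilon'}(X,Y)=\eta\, f_{\uvarepsilon}(\alpha X+\beta Y,\gamma X+\delta Y),\qquad \alpha,\beta,\gamma,\delta\in\OS,\ \eta\in\OS^\times,\ \Delta:=\alpha\delta-\beta\gamma\in\OS^\times,$$
then a point $(x:y)$ is a root of $f_{\uvarepsilon'}$ exactly when $(\alpha x+\beta y:\gamma x+\delta y)$ is a root of $f_{\uvarepsilon}$, so the multiset of roots of $f_{\uvarepsilon'}$ is $\{\psi(a_i)\}$, where $\psi(s)=(\delta s-\beta)/(\alpha-\gamma s)$ is the M\"obius transformation attached to the inverse matrix.

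The heart of the argument is an integrality observation. Comparing the coefficients of $X^n$ on the two sides of the displayed identity gives $\eta\prod_{i=1}^n(\alpha-\gamma a_i)=1$, hence $\prod_{i=1}^n(\alpha-\gamma a_i)\in\OS^\times$. Since $\alpha,\gamma\in\OS$ and each $a_i\in\OS^\times\subset\OS$, every factor $\alpha-\gamma a_i$ lies in $\OS$; and a product of elements of $\OS$ which is an $S$--unit forces each factor to be an $S$--unit. Therefore $\alpha-\gamma a_i\in\OS^\times$ for every $i$. Plugging this into the classical difference formula for M\"obius maps,
$$\psi(a_i)-\psi(a_j)=\frac{\Delta\,(a_i-a_j)}{(\alpha-\gamma a_i)(\alpha-\gamma a_j)},$$
and using $\Delta\in\OS^\times$, I conclude that every difference of two roots of $f_{\uvarepsilon'}$ equals a fixed difference $a_i-a_j$ of roots of $f_{\uvarepsilon}$ multiplied by an $S$--unit.

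To finish I would anchor at the root common to both forms. Because $\varepsilon_1=\varepsilon'_1=1$, the value $\alpha_1$ is a root of $f_{\uvarepsilon}$ and of $f_{\uvarepsilon'}$; write $\alpha_1=\psi(a_{i_0})$. For each index $m$ the root $\alpha_m\varepsilon'_m$ equals $\psi(a_j)$ for some $j$, so $\alpha_1-\alpha_m\varepsilon'_m=\psi(a_{i_0})-\psi(a_j)\in(a_{i_0}-a_j)\OS^\times$, and $a_{i_0}-a_j$ ranges over the fixed finite set $\{a_p-a_q\}$ of differences of roots of $f_{\uvarepsilon}$. Thus for each $m$ one has $\alpha_1-\alpha_m\varepsilon'_m=\theta w$ with $\theta$ in a fixed finite subset of $K$ and $w\in\OS^\times$; dividing by $\alpha_1$ turns this into an $S$--unit equation in the two $S$--units $\varepsilon'_m$ and $w$, whose solutions are finite in number by the finiteness theorem for $S$--unit equations, the basic consequence of Schmidt's subspace theorem on which the whole paper rests (the degenerate case $\theta=0$ forcing the single value $\varepsilon'_m=\alpha_1/\alpha_m$). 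Since this pins each $\varepsilon'_m$ to finitely many values, only finitely many tuples $\uvarepsilon'$ can occur, which is the assertion. I expect the only genuinely delicate point to be the integrality step isolating each $\alpha-\gamma a_i$ as an $S$--unit; once that is secured, the passage to an $S$--unit equation and the appeal to a finiteness already used throughout the paper are routine.
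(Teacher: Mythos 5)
Your proof is correct, and it takes a genuinely different route from the paper's. The paper exploits the hypothesis $\uvarepsilon,\uvarepsilon'\in\calE\setminus\calE^\star$ in an essential way: evaluating the functional equation at $(1,0)$ and at $(\alpha\delta-\beta\gamma,0)$ shows that $f_{\uvarepsilon'}(\alpha,\gamma)$ and $f_{\uvarepsilon}(\delta,-\gamma)$ are $S$--units, so that, by the very definition of $\calE^\star$, one gets $\alpha\gamma=\delta\gamma=0$; the transformation matrix is thus forced to be triangular or anti--diagonal, and the proof concludes either by a direct identity (the case $\alpha=\delta=0$, giving at most $n!$ possibilities) or, when $\gamma=0$, by eliminating $\alpha,\beta,\delta$ to reach a three--term Siegel--type $S$--unit equation. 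You never use $\calE^\star$: after a legitimate enlargement of $S$ making the $\alpha_i$ into $S$--units (harmless, since enlarging $S$ only weakens the equivalence relation and enlarges the set of tuples to be counted), you observe that each linear factor $\alpha-\gamma a_i$ is itself an $S$--unit because the product of these factors is $\eta^{-1}$, you deduce from the M\"obius difference formula that differences of roots of $f_{\uvarepsilon'}$ are $S$--unit multiples of differences of roots of $f_{\uvarepsilon}$, you anchor at the common root $\alpha_1$ (using $\varepsilon_1=\varepsilon'_1=1$), and you finish with a two--term unit equation, i.e.\ the $\ell=2$ case of the subspace-type Theorem~\ref{Theoreme:SousEspace}. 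Your argument therefore proves a slightly stronger statement --- finiteness of the set of $\uvarepsilon'\in\calE$ with $f_{\uvarepsilon'}$ $S$--equivalent to $f_{\uvarepsilon}$, with no exclusion of $\calE^\star$ --- and is more uniform (no case split on the shape of the matrix); the paper's version, by contrast, needs no enlargement of $S$ and exhibits the matrix structure explicitly. Both routes remain effective, since two-- and three--term $S$--unit equations are effectively solvable.
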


On déduit  encore du  théorème $\ref{Theoreme:Principal}$ le corollaire suivant. 

\begin{cor}\label{cor:inequationThueMahler}
Pour chaque $m\in \N$ et pour chaque $\uvarepsilon \in \calE $ en dehors d'un sous--ensemble fini (dépendant de $m$), l'inéquation 
$$
0<\rmN_S\bigl( f_{\uvarepsilon} (x,y)\bigr)\le m
$$
n'a pas de solution $(x,y)\in\OS\times\OS$ vérifiant $xy\not=0$. 
\end{cor}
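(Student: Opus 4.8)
The plan is to reduce this inequality statement to Theorem~\ref{thm:Thue} (itself a consequence of Theorem~\ref{Theoreme:Principal}), applied not to $S$ but to a finite enlargement of $S$ depending only on $m$. The guiding observation is that the bound $\rmN_S\bigl(f_{\uvarepsilon}(x,y)\bigr)\le m$ prevents $f_{\uvarepsilon}(x,y)$ from being divisible, outside $S$, by primes of large norm; enlarging $S$ so as to absorb the finitely many admissible primes turns $f_{\uvarepsilon}(x,y)$ into a unit for the larger set, at which point Theorem~\ref{thm:Thue} applies directly.

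First I would fix the enlarged set. For $m\ge 1$, let $S_m$ be the (finite) set of finite places $v\notin S$ whose underlying prime $\mathfrak{p}$ satisfies $\rmN(\mathfrak{p})\le m$, and put $S'=S\cup S_m$; this depends only on $K$, $S$ and $m$, not on $\uvarepsilon$ nor on $(x,y)$. I would then record the elementary fact that if $a\in\OS$ satisfies $0<\rmN_S(a)\le m$ then $a\in O_{S'}^\times$. Indeed, for a nonzero $S$--integer one has $\rmN_S(a)=\prod_{\mathfrak{p}\notin S}\rmN(\mathfrak{p})^{\ord_{\mathfrak{p}}(a)}\ge 1$, so that if some prime $\mathfrak{p}\notin S$ with $\rmN(\mathfrak{p})>m$ divided $a$ we would get $\rmN_S(a)\ge\rmN(\mathfrak{p})>m$, a contradiction; hence $a$ is divisible outside $S$ only by the primes of $S_m$, i.e. $|a|_v=1$ for every $v\notin S'$, which is exactly $a\in O_{S'}^\times$.

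Next I would apply Theorem~\ref{thm:Thue} with $S'$ in place of $S$. Since $\OS^\times\subset O_{S'}^\times$, every $\uvarepsilon\in\calE$ still lies in the analogous set $\calE_{S'}$ attached to $S'$ (the defining conditions $\varepsilon_1=1$ and $\Card\{\alpha_1\varepsilon_1,\dots,\alpha_n\varepsilon_n\}\ge 3$ are unchanged). Theorem~\ref{thm:Thue} thus produces a finite subset $\calE_{S'}^\star\subset\calE_{S'}$ such that, for every $\uvarepsilon\in\calE_{S'}\setminus\calE_{S'}^\star$ and every $(x,y)\in O_{S'}\times O_{S'}$, the condition $f_{\uvarepsilon}(x,y)\in O_{S'}^\times$ forces $xy=0$. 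I would then set $\calE^\star_m=\calE\cap\calE_{S'}^\star$, a finite set depending on $m$, and this is the exceptional set of the corollary.

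To conclude, take $\uvarepsilon\in\calE\setminus\calE^\star_m$ and suppose, for contradiction, that some $(x,y)\in\OS\times\OS$ with $xy\ne 0$ satisfies $0<\rmN_S\bigl(f_{\uvarepsilon}(x,y)\bigr)\le m$. By the first step $f_{\uvarepsilon}(x,y)\in O_{S'}^\times$; since $(x,y)\in\OS\times\OS\subset O_{S'}\times O_{S'}$ and $\uvarepsilon\in\calE_{S'}\setminus\calE_{S'}^\star$, the third paragraph forces $xy=0$, contradicting $xy\ne 0$. Hence the inequality has no nontrivial solution. I expect the main difficulty to be conceptual rather than computational: one must choose the enlargement $S'$ once and for all in terms of $m$ (independently of $\uvarepsilon$ and of the putative solution), so that a single application of Theorem~\ref{thm:Thue} to $S'$ yields the finite exceptional set $\calE^\star_m$. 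The norm estimate of the second paragraph, though standard, is precisely what bridges the inequality $\rmN_S\le m$ to the $S'$--unit value problem governed by Theorem~\ref{thm:Thue}.
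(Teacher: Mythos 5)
Your reduction is correct and is surely the deduction the authors intend (the paper states this corollary without writing out a proof): adjoin to $S$ the finitely many primes of norm at most $m$ to get $S'$, observe that $0<\rmN_S(a)\le m$ for an $S$--integer $a$ forces $a$ to be an $S'$--unit, and invoke Theorem~\ref{thm:Thue} over $S'$, whose exceptional set depends only on $S'$ and hence only on $m$. The one point to tighten is that your key lemma requires $a=f_{\uvarepsilon}(x,y)$ to lie in $\OS$, which can fail if some $\alpha_i$ has a denominator outside $S$: the paper only assumes $\alpha_i\in K^\times$, and $\rmN_S$ is defined on all of $K^\times$, where a negative valuation at a large prime is compatible with $\rmN_S\le m$. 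This is easily repaired: choose a positive integer $q$ with $q\alpha_i\in\OS$ for all $i$; then $q^n f_{\uvarepsilon}(x,y)\in\OS$ and $\rmN_S\bigl(q^n f_{\uvarepsilon}(x,y)\bigr)\le \rmN_S(q)^n\, m\le q^{nd}m$, so applying your lemma to this element with the bound $q^{nd}m$ (and adjoining to $S'$ also the primes dividing $q$) again yields $f_{\uvarepsilon}(x,y)\in O_{S'}^\times$, after which the rest of your argument goes through unchanged.
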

 
 C'est ainsi que nous obtenons une infinité de classes de $S$--équivalence de formes binaires donnant lieu à des inéquations de Thue--Mahler $(\ref{Inequation:ThueMahler})$ dont toutes les solutions sont triviales. 

Pour énoncer le corollaire $\ref{Corollaire:Principal}$ ci--dessous, nous définissons une famille de formes binaires $f_{\varepsilon}$, indexée par les $S$--unités d'un corps de nombres, de la façon suivante.
Partons d'une forme
$$
f(X,Y) =a_0X^d+a_1X^{d-1}Y+\cdots+a_{d-1}Y^{d-1}X +a_dY^d \in\Z[X,Y]
$$ 
irréductible sur $\Z$ et écrivons 
$$
f(X,Y)=a_0\bigl(X-\sigma_1(\alpha) Y\bigr) \bigl(X-\sigma_2(\alpha) Y\bigr)\cdots \bigl(X-\sigma_d(\alpha) Y\bigr),
$$
où $\alpha$ est une racine de $f(X,1)$ et où $\sigma_1, \sigma_2, \dots,\sigma_d$ sont les $d$ plongements du corps $K:=\Q(\alpha)$ dans $\C$. 
Soient $p_1,\ldots,p_t$ des nombres premiers et $S$ l'ensemble des places du corps $K$ constitué des places archimédiennes et des places de $K$ au--dessus de $p_1,\ldots,p_t$. Tordons $f(X,Y)$ par une $S$--unité $\varepsilon \in \OS^\times$ en définissant la forme binaire $f_\varepsilon(X,Y)\in \Z[X,Y]$ de degré $d$ par 
$$
f_\varepsilon(X,Y)=a_0\bigl(X-\sigma_1(\alpha\varepsilon) Y\bigr) \bigl(X-\sigma_2(\alpha\varepsilon) Y\bigr) \cdots \bigl(X-\sigma_d(\alpha\varepsilon) Y\bigr).
$$
Du théorème $\ref{Theoreme:Principal}$ nous déduisons la généralisation suivante du corollaire $\ref{Corollaire:ThueMahler}$. 

\begin{cor}\label{Corollaire:Principal}
Soit $k\in\Z\setminus\{0\}$. 
 L'ensemble des $(x,y,\varepsilon,z_1,\ldots,z_t)$ dans $\Z^2\times \OS^\times\times  \N^t$ satisfaisant
\begin{equation}\label{Equation:CorollairePrincipal}
 f_\varepsilon(x,y)=kp_1^{z_1}\cdots p_t^{z_t}, 
 \end{equation}
 avec $xy\not=0$, $\pgcd(xy,p_1\cdots p_t)=1$ et $ [\Q(\alpha \varepsilon):\Q]\ge 3 $,
 est fini. Le nombre de ces solutions est majoré par  
 une constante $ \kappa_2$,
effectivement calculable en fonction de $\alpha$, $k$, $f$ et $t$, 
dont la valeur est explicitée dans la formule $(\ref{Equation:cstCor})$.

\end{cor}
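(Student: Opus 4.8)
Le plan est de d\'eduire le corollaire \ref{Corollaire:Principal} du th\'eor\`eme \ref{Theoreme:Principal}, en travaillant dans la cl\^oture galoisienne $L$ de $K/\Q$, puis de conclure gr\^ace \`a l'\'equation de Thue--Mahler classique (corollaire \ref{Corollaire:ThueMahler}). On note $\sigma_1=\mathrm{id},\dots,\sigma_d$ les plongements de $K$ dans $\C$ et on choisit un ensemble fini de places $S'$ de $L$, contenant les places archim\'ediennes et celles au--dessus de $p_1,\dots,p_t$, assez grand pour que $a_0$ et $k$ soient des $S'$--unit\'es, que $\sigma_i(\alpha)\in O_{S'}$ et que $\sigma_i(\OS^\times)\subseteq O_{S'}^\times$ pour tout $i$. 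En particulier $\Z\subseteq O_{S'}$.

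Soit $(x,y,\varepsilon,z_1,\dots,z_t)$ une solution. Posons $u=p_1^{z_1}\cdots p_t^{z_t}\in O_{S'}^\times$, de sorte que $f_\varepsilon(x,y)=ku$. Comme $[\Q(\alpha\varepsilon):\Q]\ge 3$, au moins trois des valeurs $\sigma_1(\alpha\varepsilon),\dots,\sigma_d(\alpha\varepsilon)$ sont distinctes; fixons des indices $i_1,i_2,i_3$ pour lesquels $\sigma_{i_1}(\alpha\varepsilon),\sigma_{i_2}(\alpha\varepsilon),\sigma_{i_3}(\alpha\varepsilon)$ le sont. En posant $\alpha_j=\sigma_{i_j}(\alpha)$, $\varepsilon_j=\sigma_{i_j}(\varepsilon)\in O_{S'}^\times$, $z=a_0\prod_{i\ne i_1,i_2,i_3}(x-\sigma_i(\alpha\varepsilon)y)\in O_{S'}$, $\mu=k$ et $E=u$, le septuplet $(x,y,z,\varepsilon_1,\varepsilon_2,\varepsilon_3,E)$ est une solution non triviale, sur $(L,S')$, de l'\'equation du th\'eor\`eme \ref{Theoreme:Principal}, v\'erifiant $\Card\{\alpha_1\varepsilon_1,\alpha_2\varepsilon_2,\alpha_3\varepsilon_3\}=3$. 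Il n'y a qu'un nombre fini de triplets $(i_1,i_2,i_3)$; pour chacun, les $\alpha_j$ et $\mu=k$ \'etant fix\'es, le th\'eor\`eme \ref{Theoreme:Principal} borne le nombre de classes de $S^3$--d\'ependance.

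Le point d\'elicat est de passer de la finitude du nombre de classes \`a celle du nombre de solutions enti\`eres. Soient donc deux solutions $(x,y,\varepsilon,\dots)$ et $(x',y',\varepsilon',\dots)$ appartenant \`a une m\^eme classe: il existe $\eta_1,\eta_2,\eta_3\in O_{S'}^\times$ avec $x'=x\eta_1$, $y'=y\eta_1\eta_3^{-1}$ et $\varepsilon'_j=\varepsilon_j\eta_3$ $(j=1,2,3)$. Les deux premi\`eres relations donnent $\eta_1=x'/x$ et $\eta_3=x'y/(xy')\in\Q^\times$. Comme $\varepsilon_j=\sigma_{i_j}(\varepsilon)$, la troisi\`eme donne $\eta_3=\sigma_{i_1}(\delta)$ o\`u $\delta:=\varepsilon'/\varepsilon\in\OS^\times$; ainsi $\sigma_{i_1}(\delta)\in\Q$, d'o\`u $\delta\in\Q\cap\OS^\times$, c'est--\`a--dire $\delta=\pm p_1^{a_1}\cdots p_t^{a_t}$. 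Or $x,y,x',y'$ sont premiers \`a $p_1\cdots p_t$, donc $\eta_3=x'y/(xy')$ est de valuation nulle en chaque $p_i$; comme $\eta_3=\delta$ n'a de valuations non nulles qu'aux $p_i$, on conclut $\delta=\pm1$, soit $\varepsilon'=\pm\varepsilon$. Chaque classe ne fournit donc qu'au plus deux valeurs de $\varepsilon$, et finalement $\varepsilon$ ne prend qu'un nombre fini de valeurs.

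Il reste \`a fixer $\varepsilon$ parmi ce nombre fini de valeurs. La forme $f_\varepsilon\in\Z[X,Y]$ est alors fix\'ee et, les racines de $f_\varepsilon(X,1)$ \'etant les $\sigma_i(\alpha\varepsilon)$ $(1\le i\le d)$, dont au moins trois sont distinctes, la factorisation de $f_\varepsilon$ sur $\C$ poss\`ede au moins trois facteurs lin\'eaires deux \`a deux non proportionnels. Le corollaire \ref{Corollaire:ThueMahler} assure alors que l'\'equation $f_\varepsilon(x,y)=kp_1^{z_1}\cdots p_t^{z_t}$ n'a qu'un nombre fini de solutions avec $xy\ne0$ et $\pgcd(xy,p_1\cdots p_t)=1$. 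En rassemblant, le nombre total de $(x,y,\varepsilon,z_1,\dots,z_t)$ est fini et major\'e par une constante $\kappa_2$ obtenue en combinant le nombre de triplets, la borne $\kappa_1$, le facteur $2$ et une borne explicite de Thue--Mahler. L'obstacle principal est bien le passage des classes aux solutions enti\`eres, que l'hypoth\`ese $\pgcd(xy,p_1\cdots p_t)=1$ permet de franchir via l'argument de valuation ci--dessus.
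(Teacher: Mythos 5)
Your proof is correct, but the decisive step differs from the paper's. Both arguments begin the same way: apply Th\'eor\`eme \ref{Theoreme:Principal} with $\alpha_i=\sigma_i(\alpha)$, $\varepsilon_i=\sigma_i(\varepsilon)$ and $\mu$ essentially equal to $k$ (you absorb $a_0$ into the factor $z$, the paper takes $\mu=k/a_0$), the hypothesis $[\Q(\alpha\varepsilon):\Q]\ge 3$ furnishing the condition $\Card\{\alpha_1\varepsilon_1,\alpha_2\varepsilon_2,\alpha_3\varepsilon_3\}=3$; and both exploit $\pgcd(xy,p_1\cdots p_t)=1$ via the fact that $\Q\cap\OS^\times=\{\pm1\}\times p_1^{\Z}\times\cdots\times p_t^{\Z}$. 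The difference lies in converting the finiteness of $S^3$--dependence classes into finiteness of integer solutions. The paper shows that each class contains at most four full solutions: from $x'=x\etatilde$ it deduces $\etatilde=\pm1$, then from $y'=y\eta^{-1}\etatilde$ that $\eta=\pm1$, so that $x'$, $y'$, $\varepsilon'$ are determined up to sign and the exponents $z_i$ exactly; this yields $\kappa_2=4\kappa_1$ directly. You instead extract from the class relation only that $\varepsilon'=\pm\varepsilon$ (your valuation argument for $\eta_3=x'y/(xy')$ is sound), so that $\varepsilon$ runs over a finite set, and then, for each fixed $\varepsilon$, you invoke the classical Thue--Mahler statement (Corollaire \ref{Corollaire:ThueMahler}) to bound the remaining unknowns $(x,y,z_1,\ldots,z_t)$. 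This is valid, and on one point you are more careful than the paper: you pass to the Galois closure $L$ and a set $S'$ of places of $L$ so that the $\sigma_i(\alpha)$ and $\sigma_i(\varepsilon)$ genuinely lie in the field where the theorem is applied, a step the paper leaves implicit. The price is that your constant $\kappa_2$ needs an extra quantitative ingredient, namely an explicit upper bound for the number of solutions of a Thue--Mahler equation (available from Evertse's work, but not the formula $(\ref{Equation:cstCor})$ of the paper), whereas the paper's direct four--solutions--per--class count avoids this entirely and gives the cleaner bound $4\kappa_1$.
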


En particulier, lorsque $ [\Q(\alpha \varepsilon):\Q]\ge 3$, l'équation de Thue 
$f_\varepsilon(X,Y) =\pm 1$, sauf pour un nombre fini de $S$--unités $\varepsilon$, n'a que des solutions triviales.
Un autre cas particulier du corollaire $\ref{Corollaire:Principal}$ s'énonce de la façon suivante: soient $K$ un corps de nombres de degré $\geq 3$, $k$ un entier non nul et $p_1, p_2, \dots , p_t$ des nombres premiers. 
Pour toute unité $\varepsilon$ de $K$, désignons par $g_{\varepsilon}(X,Y) \in {\bf Z}[X,Y]$ la version homog\`ene du
polyn\^ome minimal $g_{\varepsilon}(X,1) \in {\bf Z}[X]$ de $\varepsilon$. {\it Alors l'ensemble des unit\'es $\varepsilon$ de $K$, telles que 
$[\Q(\varepsilon) : \Q] \geq 3$ et que l'équation $g_\varepsilon(X,Y) = k p_1^{Z_1}\cdots p_t^{Z_t}$ ait une solution $(x,y ,z_1,\ldots,z_t)$ dans $\Z^2 \times  \N^t$ vérifiant $xy\not=0$ avec $\pgcd(xy,p_1\cdots p_t)=1$, est fini. }

\bigskip 
Étant donné que nos démonstrations reposent sur le théorème du sous--espace de Schmidt (cf{.}~Théorème $\ref{Theoreme:SousEspace}$),
elles ne permettent pas d'obtenir des énoncés effectifs: elles conduisent à des énoncés quantitatifs avec des bornes pour le nombre de solutions, mais pas à des majorations pour les solutions elles-mêmes. Obtenir des résultats effectifs, au moins pour certains cas particuliers du théorème $\ref{Theoreme:Principal}$, sera l'objet de travaux ultérieurs.


\section{Lien avec un théorème de Vojta}

Le théorème 2.4.1 de Vojta dans \cite{MR883451} (dont la démonstration repose aussi sur le théorème $\ref{Theoreme:SousEspace}$) permet de montrer que l'ensemble des solutions de l'équation $(\ref{Equation:5variables})$ est {\it dégénéré} au sens de \cite{MR883451}, c'est-à-dire contenu dans un sous--ensemble fermé propre pour la topologie de Zariski.
Voici l'énoncé du corollaire 2.4.2 de \cite{MR883451}.

\begin{thm}[Vojta]\label{Theoreme:Vojta}
Soit $D$ un diviseur de $\P^n$ ayant au moins $n+2$ composantes distinctes. Alors tout ensemble de points $D$--entiers de $\P^n$ est dégénéré. \end{thm}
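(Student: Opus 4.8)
Le plan est de d\'eduire cet \'enonc\'e du th\'eor\`eme du sous--espace (th\'eor\`eme \ref{Theoreme:SousEspace}), suivant le sch\'ema classique qui relie points entiers et approximation diophantienne. Je me place dans le cadre de Vojta, o\`u $D=H_1+\cdots+H_q$ est une somme de $q\ge n+2$ hyperplans en position g\'en\'erale (toute sous--famille de $n+1$ d'entre eux \'etant lin\'eairement ind\'ependante), chaque $H_i$ \'etant le lieu des z\'eros d'une forme lin\'eaire $\ell_i$ \`a coefficients dans $K$. La premi\`ere \'etape consiste \`a traduire la $(D,S)$--int\'egralit\'e \`a l'aide des fonctions de Weil locales $\lambda_{H_i,v}$: si $x\in\P^n(K)$ est $(D,S)$--entier, alors la proximit\'e \`a $D$ est concentr\'ee sur $S$, c'est-\`a-dire $\sum_{v\in S}\sum_{i=1}^q\lambda_{H_i,v}(x)\ge h_D(x)-O(1)$. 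Comme chaque $H_i$ est de degr\'e $1$, on a $h_D(x)=q\,h(x)+O(1)$, de sorte que la somme de proximit\'e sur $S$ est minor\'ee par $q\,h(x)-O(1)$.

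Le c{\oe}ur de l'argument consiste ensuite \`a remplacer, place par place, la somme compl\`ete par une somme portant sur $n+1$ formes lin\'eairement ind\'ependantes. Fixons $v\in S$ et ordonnons les formes de sorte que $|\ell_1(x)|_v\le\cdots\le|\ell_q(x)|_v$. Par la position g\'en\'erale, les $n+1$ premi\`eres sont lin\'eairement ind\'ependantes; elles d\'efinissent donc sur $K_v^{n+1}$ une norme \'equivalente \`a la norme du maximum, d'o\`u $|\ell_{n+1}(x)|_v\asymp|x|_v$ avec des constantes ind\'ependantes de $x$. On en d\'eduit $|\ell_i(x)|_v\gg|x|_v$ pour tout $i\ge n+1$, si bien que les termes d'indice $>n+1$ ne contribuent que $O(1)$. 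En posant $J_v=\{1,\dots,n+1\}$ (qui d\'epend de $x$ et de $v$), on obtient
$$\sum_{i=1}^q\lambda_{H_i,v}(x)=\sum_{j\in J_v}\lambda_{H_j,v}(x)+O(1).$$

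Il n'y a qu'un nombre fini de choix possibles pour le syst\`eme $(J_v)_{v\in S}$ de parties de taille $n+1$; je partitionne donc les points $(D,S)$--entiers selon le syst\`eme qu'ils r\'ealisent et je travaille \`a syst\`eme fix\'e, de sorte que les formes $\ell_j$ ($j\in J_v$) deviennent ind\'ependantes de $x$. En combinant les deux \'etapes pr\'ec\'edentes, il vient
$$\sum_{v\in S}\sum_{j\in J_v}\log\frac{|\ell_j(x)|_v}{|x|_v}\le -q\,h(x)+O(1),$$
c'est-\`a-dire $\prod_{v\in S}\prod_{j\in J_v}\frac{|\ell_j(x)|_v}{|x|_v}\ll H(x)^{-q}\le H(x)^{-(n+1)-\varepsilon}$ d\`es que $h(x)$ est assez grand, puisque $q\ge n+2$. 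Pour chaque syst\`eme fix\'e, le th\'eor\`eme du sous--espace affirme alors que les solutions de cette in\'egalit\'e sont contenues dans une r\'eunion finie d'hyperplans de $\P^n$. En r\'eunissant sur le nombre fini de syst\`emes $(J_v)_{v\in S}$ et en y adjoignant les points de hauteur born\'ee, qui sont en nombre fini, on conclut que l'ensemble des points $(D,S)$--entiers est contenu dans une r\'eunion finie d'hyperplans, donc est d\'eg\'en\'er\'e.

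L'obstacle principal me para\^it \^etre le lemme de position g\'en\'erale et, surtout, le contr\^ole de la d\'ependance en $x$ des formes choisies: c'est la finitude du nombre de syst\`emes $(J_v)_{v\in S}$ qui permet de ramener une in\'egalit\'e \`a coefficients variables \`a un nombre fini d'applications du th\'eor\`eme du sous--espace, lequel exige des formes fix\'ees. Un point technique secondaire est la mise en place soign\'ee des fonctions de Weil locales, l'\'equivalence entre $(D,S)$--int\'egralit\'e et concentration de la proximit\'e sur $S$, ainsi que le contr\^ole uniforme des constantes $O(1)$ dans l'estimation $\max_j|\ell_j(x)|_v\asymp|x|_v$. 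Pour des composantes de degr\'e sup\'erieur \`a $1$, on se ram\`enerait au cas des hyperplans via un plongement de Veronese.
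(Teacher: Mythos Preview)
Le papier ne d\'emontre pas ce th\'eor\`eme: il se contente de le citer comme le corollaire~2.4.2 de~\cite{MR883451}, puis l'utilise comme bo\^{\i}te noire pour illustrer le lien avec l'\'equation~$(\ref{Equation:5variables})$. Il n'y a donc pas de preuve dans le papier \`a laquelle comparer ta proposition.

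Ton esquisse est correcte et suit pr\'ecis\'ement le sch\'ema de Vojta lui-m\^eme: traduire la $(D,S)$--int\'egralit\'e en concentration de la proximit\'e sur~$S$, s\'electionner place par place les $n+1$ formes les plus proches, partitionner selon le nombre fini de syst\`emes $(J_v)_{v\in S}$, puis appliquer le th\'eor\`eme du sous--espace de Schmidt. Deux remarques mineures. D'une part, tu renvoies au th\'eor\`eme~\ref{Theoreme:SousEspace}, mais dans ce papier cet \'enonc\'e est la version \og{}\'equation aux $S$--unit\'es\fg{} (Evertse, Beukers--Schlickewei); ce dont tu as r\'eellement besoin est le th\'eor\`eme du sous--espace sous sa forme originelle (in\'egalit\'e de type produit sur des formes lin\'eaires), dont l'\'enonc\'e~\ref{Theoreme:SousEspace} est une cons\'equence mais non une formulation \'equivalente. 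D'autre part, tu ajoutes l'hypoth\`ese de \emph{position g\'en\'erale}, absente de l'\'enonc\'e du papier qui dit seulement \og{}composantes distinctes\fg{}; c'est bien sous cette hypoth\`ese que Vojta \'etablit son th\'eor\`eme~2.4.1, et ta r\'eduction par plongement de Veronese pour les composantes de degr\'e~$>1$ ne la garantit pas automatiquement. Ta preuve couvre donc en toute rigueur le cas des hyperplans en position g\'en\'erale; le passage \`a l'\'enonc\'e tel que formul\'e ici (composantes distinctes quelconques) demande un argument suppl\'ementaire qu'il faudrait au moins signaler.
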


Montrons comment appliquer ce résultat avec $n=4$ à l'équation $(\ref{Equation:5variables})$.
Nous avons vu (au \S$\ref{S:Enonces}$) que le corollaire \ref{thm:5variables} était équivalent au théorème \ref{Theoreme:Principal} . Dans le même esprit, quitte à agrandir $S$, nous pouvons supposer que $\alpha_1=\alpha_2=\alpha_3=\mu=1$. Si $(x,y,\varepsilon_1,\varepsilon_2, \varepsilon)\in\OS^2\times (\OS^\times)^3$ est une solution, alors $x- y$, $x-\varepsilon_1y$ et $x-\varepsilon_2y$ sont des $S$--unités.

Introduisons des coordonnées projectives $(X:Y:Z:E_1:E_2)$ sur $\P^4(K)$ et considérons le diviseur $D$ défini par l'hypersurface 
 $$
  Z\, E_1 \, E_2 \cdot (X- Y) (XZ- E_1 Y) (XZ- E_2 Y) =0
$$
qui a $6$ composantes irréductibles. 
Montrons que chaque solution $\mathbfp=(x,y,\varepsilon_1,\varepsilon_2,\varepsilon)\in\OS^2\times (\OS^\times)^3$ de l'équation 
$(\ref{Equation:5variables})$
en les inconnues $(X,Y,E_1,E_2,E)$ 
définit un point entier sur la variété ouverte $\P^4(K)\setminus D$, à savoir le point $\mathbfptilde=(x:y:1:\varepsilon_1:\varepsilon_2)$. En effet, les coordonnées de $\mathbfptilde$ sont dans $\OS$, et le point $\mathbfptilde$ ne se réduit pas modulo un premier en dehors de $S$ à un point d'un des hyperplans $Z=0$, $E_1=0$, $E_2=0$, $X-Y=0$,
ni à un point d'une des hypersurfaces $XZ-E_1Y=0$,
$XZ-E_2Y=0$,
car les nombres $\varepsilon$, $\varepsilon_1$, $\varepsilon_2$, $x-y$, $x- \varepsilon_1 y$ et $x- \varepsilon_2 y $ sont des $S$--unités. Le théorème $\ref{Theoreme:Vojta}$ montre que les solutions de $(\ref{Equation:5variables})$ ne sont pas Zariski denses dans $V=\P^4(K)\setminus D$. En désignant par $\calE$ l'ensemble des $(x,y,\varepsilon_1,\varepsilon_2)\in\OS^2\times (\OS^\times)^2$ qui vérifient 
$$ 
 (x- y) (x- \varepsilon_1 y) (x- \varepsilon_2 y)\in\OS^\times,
$$
cela signifie qu'il existe un polynôme non nul $P$ en les variables $(X,Y,E_1,E_2)$ qui s'annule au point $(x:y: \varepsilon_1:\varepsilon_2)$ chaque fois que $(x,y,\varepsilon_1,\varepsilon_2)$ est dans $\calE$. On peut remarquer que si 
$(x,y,\varepsilon_1,\varepsilon_2)\in\calE$, alors $(x,y,\varepsilon_2,\varepsilon_1) $ et $(y,x,\varepsilon_1,\varepsilon_2)$ sont aussi dans $\calE$, de même que $(tx,ty,\varepsilon_1,\varepsilon_2)$ pour tout $t$ dans $\OS^\times$. Comme on peut supposer sans perte de généralité que le groupe $\OS^\times$ est infini, on en déduit que le polynôme $P(Tx,Ty, \varepsilon_1, \varepsilon_2)\in K[T]$ est nul. Donc 
$$
P(Tx,Ty,\varepsilon_2,\varepsilon_1)=
P(Ty,Tx,\varepsilon_1,\varepsilon_2)=
P(Tx,Ty, \varepsilon_1, \varepsilon_2)=
P(Ty,Tx,\varepsilon_2,\varepsilon_1)=0.
$$
Il ne semble pas que ces arguments permettent de terminer la démonstration du théorème $\ref{Theoreme:Principal}$; pour y parvenir nous allons revenir à la source, à savoir le théorème du sous--espace de Schmidt. Au préalable, rappelons quelques outils diophantiens.


\section{Outils diophantiens} \label{S:Outils diophantiens} 
 
Soient $K$ un corps de nombres de degré $d=[K:\Q]$ sur $\Q$, $D_K$ son discriminant, $h_K$ son nombre de classes, $R_K$ son régulateur, $M_K$ l'ensemble des places de $K$ et $S_\infty$ l'ensemble des places archimédiennes de $K$. 

Le rang $r$ du groupe des unités de $K$ est $r_1+r_2-1$, où $r_1$ est le nombre de places réelles de $K$ et $r_2$ le nombre de places archimédiennes non réelles. On a $d=r_1+2r_2$ et on note $d_v=1$ pour $v$ une place réelle, $d_v=2$ pour $v$ une place archimédienne non réelle, de sorte que $d=\sum_{v\in S_\infty} d_v$. 
Nous adopterons la normalisation des valeurs absolues 
utilisée dans \cite{MR1373714} et \cite{MR2232500} :
pour $\alpha\in K$, 
$$
|\alpha|_v=\begin{cases}
|\sigma(\alpha)| & \hbox{ si $v$ est une place archimédienne liée à un plongement réel $\sigma:K\rightarrow \bR$,}
\\
|\sigma(\alpha)|^2 & \hbox{ si $v$ est une place archimédienne liée à un plongement non réel $\sigma:K\rightarrow \C$,}
\\
\rmN(\gothP)^{-\ord_\gothP(\alpha) } & \hbox{ si $v$ est une place ultramétrique liée à un idéal premier $\gothP$ de $\OK$.}
\end{cases}
$$
On notera $\rmh(\alpha)$ la hauteur logarithmique absolue d'un élément $\alpha$ de $K$ (c'est la notation de \cite{MR2232500}, alors que ce qui est noté $ \rmh(\alpha)$ dans \cite{MR1373714}, \cite{MR1117339} et \cite{MR1004133} est pour nous $\exp\{ \rmh(\alpha)\}$) :
$$
\rmh(\alpha)=\frac{1}{d}\sum_{v\in M_K} \log \max\{1,|\alpha|_v\}.
$$
On désigne par $\delta_K$ un nombre réel positif tel que tout entier algébrique non nul $\alpha\in K$ qui n'est pas une racine de l'unité vérifie $\rmh(\alpha)\ge \delta_K /d$. Des minorations explicites de $\delta_K$ (dues à Blanksby et Mongomery, Dobrowolski et Voutier) sont données dans \cite{MR1373714}. 

Rappelons (cf{.}~\cite{MR1004133}, \S2 ou bien \cite{MR971998}, \S1) la définition de la   $S$--norme, dénotée $\rmN_S$, associée à un ensemble fini $S$ de places de $K$ contenant les places archimédiennes. Soit $\alpha\in K^\times$. L'idéal principal fractionnaire $(\alpha)$ de $\OK$ engendré par $\alpha$ s'écrit de manière unique comme un produit $\gothA \gothB$, où $\gothA$ est un idéal fractionnaire de $\OK$ qui est produit de puissances d'idéaux premiers qui ne sont pas au--dessus des places finies de $S$, tandis que $\gothB$ est un idéal fractionnaire de $\OK$ qui est produit de puissances d'idéaux premiers au--dessus des places finies de $S$. Alors $N_S(\alpha)$ est la norme de l'idéal $\gothA$. 

Ainsi, par définition, un élément $\alpha$ de $K$ appartient à $\OS$ si et seulement si l'idéal $\gothA$ est un idéal entier de $\OK$. 
De plus, les $S$--unités de $K$ sont les éléments $\varepsilon$ de $\OS$ qui vérifient $\rmN_S(\varepsilon)=1$. Si $S=S_\infty$, alors $\OS=\OK$, $\OS^\times=\OKtimes$ et $\rmN_S$
 coïncide avec $|\rmN_{K/\Q}|$.

Quand $K=\Q$, si $S$ est l'ensemble constitué des places archimédiennes et des places au--dessus des nombres premiers $p_1,\ldots,p_t$, pour un nombre rationnel non nul $\alpha$ écrit sous la forme
$$
\alpha=\pm\left(\prod_{i=1}^t p_i^{a_i}\right)\left(\prod_{j=t+1}^r p_j^{a_j}\right)
$$
avec $a_\ell\in\Z$ $(\ell=1,\dots,r)$ et $p_j\not\in \{p_1,\dots,p_t\}$ ($j=t+1,\dots,r$),
alors on a 
$$
\rmN_S(\alpha)= \prod_{j=t+1}^r p_j^{a_j}. 
$$
On notera $s$ le rang du groupe $\OS^\times$, ce qui fait que le nombre d'éléments\footnote{Noter que dans \cite{MR1373714} le nombre d'éléments de $S$ est noté $s$ (et le rang de $\OS^\times$ est bien sûr $s+1$).} de $S$ est $s-1$. Si $r=r_1+r_2-1$ est le rang du groupe des unités de $K$, alors $t=s-1-r$ est le nombre d'idéaux premiers $\gothP_1,\ldots,\gothP_t$ de $\OK$ au--dessus des places ultramétriques de $K$ dans $S$. On désigne par $\nu$ le plus grand des nombres $|\rmN(\gothP_i)|$ ($1\le i\le t$), avec $\nu=1$ si $t=0$ (c'est-à-dire si $S=S_{\infty}$). 
 La norme indice $S$ d'un élément $\alpha$ est 
$$
\rmN_S(\alpha)=\prod_{v\in S}|\alpha|_v.
$$
Nous utiliserons une base $w_1,\ldots,w_d$ de l'anneau des entiers de $K$ comme $\Z$--module : 
$$
\OK=\Z w_1+\cdots+\Z w_d.
$$
D'après le lemme 5 de \cite{MR1117339}, il existe une telle base pour laquelle le nombre 
$$
\theta=\max\{1\; ,\; \max_{\sigma :K\rightarrow \C } |\sigma(w_1)|+\cdots+|\sigma(w_d)|\}
$$ 
vérifie la majoration $\theta\le |D_K|^{1/2}$.

Notons enfin $d(N)$ la fonction {\it nombre de diviseurs} d'un entier $N>0$ :
$$
d(N):=\sum_{d\mid N} 1.
$$
Nous utiliserons le fait banal que pour tout couple $(n,N)$ d'entiers positifs, le nombre de décomposition de $N$ comme produit de $n$ facteurs $m_1 m_2\cdots m_n=N$ est majoré par $ d(N)^{n-1}$.

L'outil principal des preuves de nos généralisations de ces résultats, que nous utiliserons plusieurs fois par la suite, est une 
conséquence du théorème du sous--espace de Schmidt. L'énoncé précis que nous utiliserons est le sui\-vant, 
dû à Evertse  \cite{MR1359604}, raffiné dans le cas $\ell=2$ par Beukers et Schlickewei \cite{MR1424539}.

\begin{thm}[Evertse, Beukers--Schlickewei]\label{Theoreme:SousEspace}
Soient $K$ un corps de nombres, 
$\delta_1,\ldots,\delta_\ell$ des \'{e}l\'{e}ments non nuls de $K$ 
et $S$ un ensemble fini de places de $K$ de cardinal $s$. 
Alors les solutions $(x_1,\ldots,x_\ell)\in (\OS^{\times})^\ell$ de l'\'{e}quation 
$$
\delta_1X_1+\delta_2X_2+\cdots+\delta_\ell X_\ell=1,
$$
pour lesquelles aucune sous--somme stricte
$$
\sum_{i\in I} \delta_ix_i\qquad (\emptyset\not = I\subset \{1,\ldots,\ell\})
$$
ne s'annule, sont en nombre fini major\'{e} par 
\begin{equation}
\label{equation:sousespace}
 \left\{
\begin{array}{ll}
(2^{33}(\ell +1)^2)^{\ell^3 s}&\mbox{pour }\,  \ell \geq 3,\\
\\
2^{8s+24} &\mbox{pour }\,  \ell =2,\\
\end{array} \right.
\end{equation}
\end{thm}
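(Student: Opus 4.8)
The plan is to derive the estimate from the quantitative $S$--arithmetic ($p$--adic) form of Schmidt's Subspace Theorem, due to Schmidt and Schlickewei, organised as an induction on the number $\ell$ of variables. A solution being understood projectively (replacing $(x_1,\dots,x_\ell)$ by $(\eta x_1,\dots,\eta x_\ell)$, $\eta\in\OS^\times$, only rescales), I would first introduce $x_0=1$ and read the equation as the homogeneous relation $-X_0+\delta_1X_1+\cdots+\delta_\ell X_\ell=0$ satisfied by the point $\mathbf{x}=(x_0:x_1:\cdots:x_\ell)\in\P^\ell(K)$, whose homogeneous coordinates are all $S$--units. Thus $\mathbf{x}$ lies on the hyperplane $H$ cut out by that relation, with $H\cong\P^{\ell-1}(K)$. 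The hypothesis that no proper subsum $\sum_{i\in I}\delta_i x_i$ vanishes says exactly that $\mathbf{x}$ is in general position with respect to the $\ell+1$ hyperplanes obtained by restricting the coordinate hyperplanes $X_0,X_1,\dots,X_\ell$ to $H$.

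Second comes the Diophantine heart. For each place $v\in S$ I would pick, among those $\ell+1$ restricted coordinate forms, a linearly independent system of $\ell$ of them, selected so as to capture the coordinates $|x_i|_v$ that are smallest at $v$. The decisive input is the $S$--unit property: by the product formula $\prod_{v\in M_K}|x_i|_v=1$, and since $|x_i|_v=1$ for $v\notin S$, one has $\prod_{v\in S}|x_i|_v=1$ for every $i$. Feeding this into a double-counting (gap) estimate, using also the defining relation on $H$, shows that the product over $v\in S$ of the chosen forms evaluated at $\mathbf{x}$, normalised by the height $H(\mathbf{x})$, is at most $H(\mathbf{x})^{-\ell-\epsilon}$ for all solutions of sufficiently large height. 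The quantitative Subspace Theorem then confines these solutions to an explicitly bounded number of proper linear subspaces of $H$, and it is this count that yields a factor of the shape $(2^{33}(\ell+1)^2)^{\ell^3 s}$.

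Third, I would run the induction over subspaces. On each proper subspace $T\subsetneq H$ produced above, the linear relation defining $T$ is itself an $S$--unit relation among the $x_i$ of length $\le\ell$; its non-degenerate solutions are controlled by the inductive hypothesis, while its degenerate ones split off shorter vanishing subsums, to which induction again applies. The finitely many solutions of bounded height are counted separately by an elementary height estimate. The base case $\ell=2$ is the two-variable equation $\delta_1 x_1+\delta_2 x_2=1$; there I would replace the general Subspace Theorem by the sharper two-dimensional analysis of Beukers and Schlickewei, which supplies the improved bound $2^{8s+24}$ that anchors the recursion.

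The main obstacle is the quantitative step and its propagation: extracting from the Subspace Theorem a completely explicit and uniform bound on the number of exceptional subspaces, and then threading it through the induction so that the resulting constant depends on $\ell$ and $s$ alone, in the stated form. The delicate bookkeeping is twofold: verifying that the non-degeneracy hypothesis survives restriction to each subspace (so the induction never stalls on an equation with a vanishing subsum), and balancing the contribution of the large-height solutions, governed by the subspace count, against that of the bounded-height ones. The deep analytic core --- the quantitative Subspace Theorem itself --- is precisely the theorem of Schmidt and Schlickewei underlying the cited bounds of Evertse and of Beukers--Schlickewei.
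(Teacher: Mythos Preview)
There is nothing to compare: the paper does not prove this theorem. It is stated as a tool, attributed to Evertse (for $\ell\ge 3$) and to Beukers--Schlickewei (for $\ell=2$), and quoted from the literature without proof. The surrounding text says explicitly that ``l'\'enonc\'e pr\'ecis que nous utiliserons est le suivant, d\^u \`a Evertse, raffin\'e dans le cas $\ell=2$ par Beukers et Schlickewei'', and the paper then simply uses it as a black box in the proof of Th\'eor\`eme~\ref{Theoreme:Principal}.

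Your sketch is a reasonable high-level account of how the original proofs go --- homogenise to a point in $\P^\ell$, use the $S$--unit condition and the product formula to set up a subspace-theorem inequality on the hyperplane $H$, invoke a quantitative Schmidt--Schlickewei subspace theorem to confine large-height solutions to finitely many proper subspaces, and descend by induction on $\ell$, with the Beukers--Schlickewei argument as the base case. That is indeed the architecture of Evertse's paper. But within the present article no such argument is needed or given; the correct ``proof'' here is simply a citation.
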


Notons à ce propos qu'un résultat beaucoup plus général a été obtenu par Amoroso et Viada (théorème 6.2 de \cite{AV}), qui autorisent 
$K$ à être un corps de caractéristique nulle et $G$ un sous--groupe de $K^\times$ de rang fini (cette situation plus générale est aussi celle considérée par Beukers et Schlickewei dans \cite{MR1424539} pour le cas $\ell=2$). Leur énoncé raffine un résultat antérieur de Schmidt, Evertse, van der Poorten et Schlickewei, qui comportait une exponentielle de plus.
La borne qu'ils obtiennent dans ce cadre bien plus général pour le nombre de solutions est un tout petit peu moins précise que celle du théorème $\ref{Theoreme:SousEspace}$, puisqu'ils ont 
$(8\ell)^{4\ell^4(\ell+s+1)}$ à la place de $(\ref{equation:sousespace})$, et c'est pourquoi nous utilisons le résultat plus ancien d'Evertse. 

Nous utiliserons plusieurs fois le théorème $\ref{Theoreme:SousEspace}$, pour $\ell=2$ puis pour $\ell=5$, qui donnent respectivement pour la constante $(\ref{equation:sousespace})$ les valeurs 
$\kappa_3-1$ et $\kappa_4-1$, avec  
$$ 
\kappa_3=1+ 2^{8s+24}
\quad\hbox{et}\quad
\kappa_4=1+ (2^{4375}3^{250})^s.
$$

Le terme dominant de nos estimations provient du lemme suivant. 

\begin{lem}\label{Lemme:ST-A15}
Soient $K$ un corps de nombres et $S$ un ensemble fini de places de $K$ contenant les places archimédiennes. Il existe une constante positive $\kappa_5$, explicitement calculable et ne dépendant que de $K$ et $S$, telle que, pour tout entier $m>0$, il existe un sous--ensemble $A_1(m)$ de $\OS\setminus\{0\}$, ayant au plus $\kappa_5m$ éléments, avec la propriété suivante : pour tout $\beta\in\OS$ vérifiant $\rmN_S (\beta)=m$, il existe $\varepsilon\in\OS^\times$ et $\gamma\in A_1(m)$ satisfaisant $\beta=\varepsilon\gamma.$
\end{lem}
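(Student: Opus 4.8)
\emph{The plan is to} translate the statement into a counting problem for integral ideals, using the standard dictionary between nonzero $S$--integers modulo $S$--units and integral ideals of the Dedekind ring $\OS$. First I would attach to each $\beta\in\OS\setminus\{0\}$ with $\rmN_S(\beta)=m$ the ideal $\gothA$ appearing in the factorization $(\beta)\OK=\gothA\gothB$ (with $\gothA$ prime to the finite places of $S$ and $\gothB$ supported on them) used to define $\rmN_S$. By the very characterization of $\OS$ recalled above, $\beta\in\OS$ forces $\gothA$ to be an integral ideal of $\OK$, and by definition $\rmN(\gothA)=\rmN_S(\beta)=m$. Since the finite places of $S$ become units in $\OS$, the principal ideal $\beta\OS$ coincides with $\gothA\OS$.

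The key observation is that two nonzero $\beta,\beta'\in\OS$ satisfy $\beta\OS=\beta'\OS$ if and only if $\beta'=\varepsilon\beta$ for some $\varepsilon\in\OS^\times$, while extension $\gothA\mapsto\gothA\OS$ and contraction are mutually inverse bijections between the integral ideals of $\OK$ prime to $S$ and the integral ideals of $\OS$. Consequently the set of classes, modulo $\OS^\times$, of the $\beta$ with $\rmN_S(\beta)=m$ injects into the finite set of integral ideals $\gothA$ of $\OK$ prime to $S$ with $\rmN(\gothA)=m$; in particular this upper bound avoids computing the $S$--class number. I would then build $A_1(m)$ by choosing, in each such class that is actually attained, a single representative $\gamma\in\OS\setminus\{0\}$; every $\beta$ with $\rmN_S(\beta)=m$ is then of the form $\varepsilon\gamma$ with $\gamma\in A_1(m)$ and $\varepsilon=\beta\gamma^{-1}\in\OS^\times$, which is exactly the assertion, and $\Card A_1(m)$ is bounded by the number of ideals of $\OK$ of norm $m$.

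\emph{The main (and essentially only) obstacle} is to make this ideal count linear in $m$. The number of integral ideals of norm $m$ is a multiplicative function of $m$, so I would estimate the local factor at each prime $p$ by the number of nonnegative solutions of $\sum_i f_i e_i=v_p(m)$, where the $f_i$ are the residue degrees of the primes of $\OK$ above $p$; since such a solution is determined by all but one of the $e_i$, this factor is at most $(v_p(m)+1)^{[K:\Q]-1}$. Multiplying over $p$ gives the crude bound $d(m)^{[K:\Q]-1}$, with $d(\,\cdot\,)$ the divisor function defined above. Finally, invoking the standard explicit estimate $d(m)\le C(\epsilon)\,m^{\epsilon}$ and choosing $\epsilon>0$ with $([K:\Q]-1)\epsilon\le 1$, I obtain $d(m)^{[K:\Q]-1}\le C(\epsilon)^{[K:\Q]-1}m$, so one may take $\kappa_5=C(\epsilon)^{[K:\Q]-1}$ (and $\kappa_5=1$ when $K=\Q$), an explicitly calculable constant depending only on $[K:\Q]$, hence only on $K$ and $S$. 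This completes the plan.
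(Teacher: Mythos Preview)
Your argument is correct, and it follows a genuinely different route from the paper. The paper proceeds by geometry of numbers: it first pulls $\beta$ back to an ordinary integer $\alpha\in\OK$ via Lemma~\ref{Lemme:9EG} (at the cost of the factor $\nu^{tdh_K}$), then uses the unit theorem (Lemma~\ref{Lemme:ScolieBG}) to normalize $\alpha$ so that all its archimedean absolute values are comparable, and finally counts lattice points in a box (Lemma~\ref{Lemme:comptage}). You instead work ideal-theoretically, identifying $\OS^\times$--classes of such $\beta$ with integral ideals of $\OK$ prime to $S$ of norm $m$, and then bound the ideal-counting function by $d(m)^{[K:\Q]-1}\le C(\epsilon)^{[K:\Q]-1}m$. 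Your approach is shorter and has the pleasant feature that the resulting $\kappa_5$ depends only on $[K:\Q]$, not on $S$ (whereas the paper's $\kappa_5$ carries the factor $\nu^{tdh_K}$); on the other hand, the paper's construction yields representatives $\gamma$ with explicitly bounded archimedean absolute values, a side benefit your abstract choice of representatives does not provide, though the lemma as stated does not require it.
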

 
 Nous obtiendrons la valeur explicite suivante pour la constante: 
$$
\kappa_5=2^{r +1}\pi^{r_2} |D_K|^{-1/2} e^{c_3 dR_K} \nu^{tdh_K} (1+\theta)^{d}
$$
avec
\begin{equation}\label{equation:c3}
c_3=\frac{1}{2}r^{r+1} \delta_K^{-(r-1)}.
\end{equation}

La première étape de la démonstration du lemme $\ref {Lemme:ST-A15}$ consistera à utiliser le lemme 9 de \cite{MR1117339} que voici.

\begin{lem}\label{Lemme:9EG}
Soit $\beta\in \OS$. Il existe $\eta_1\in\OS^\times$ tel que $\alpha=\eta_1\beta$ soit dans $\OK$ et vérifie
$$
|\rmN_{K/\Q}(\alpha)|\le \nu^{tdh_K}\rmN_S(\beta).
$$
\end{lem}

Les liens avec les notations $|\alpha|_K$ et $|\beta|_S$ de \cite{MR1117339} sont
$$
|\alpha|_K=|\rmN_{K/\Q}(\alpha)|^{1/d} \quad\hbox{ et }\quad |\beta|_S=\rmN_S(\beta)^{1/d}.
$$
Notons que le lemme $\ref{Lemme:9EG}$ est trivial quand $\beta=0$ avec $\eta_1=1$ et $\alpha=0$.
 
 \smallskip
Nous utiliserons ensuite le résultat suivant qui est un scolie découlant du lemme 2 de \cite{MR1373714}, avec la constante $c_3$ introduite en $(\ref{equation:c3})$.
 
\begin{lem}\label{Lemme:ScolieBG}
Soit $\alpha\in \OK\setminus\{0\}$. Notons $M=|\rmN_{K/\Q} (\alpha)|$. Alors il existe $\eta_2\in\OKtimes$ tel que, pour tout $v\in S_\infty$, on ait 
$$
M^{d_v/d} e^{-c_3 R_K}\le|\alpha\eta_2|_v\le M^{d_v/d} e^{c_3 R_K}.
$$
\end{lem}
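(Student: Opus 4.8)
Le plan consiste à déduire l'énoncé du lemme 2 de \cite{MR1373714}, qui fournit, pour tout $\alpha\in\OK\setminus\{0\}$, une unité $\eta_2\in\OKtimes$ rendant les valeurs $\log|\alpha\eta_2|_v$ ($v\in S_\infty$) uniformément proches de leur moyenne $\frac{d_v}{d}\log M$, avec $M=|\rmN_{K/\Q}(\alpha)|$. La clé est que le système des régulateurs, formé des logarithmes $\log|\cdot|_v$ restreints au groupe des unités, engendre le réseau des unités dont le covolume est $R_K$; donc en translatant par une unité convenable on peut ramener le vecteur $\bigl(\log|\alpha|_v\bigr)_{v\in S_\infty}$ dans un domaine fondamental du réseau, de diamètre contrôlé par $R_K$ et le rang $r$.

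Plus précisément, je poserais d'abord le vecteur des écarts $x_v=\log|\alpha|_v-\frac{d_v}{d}\log M$ pour $v\in S_\infty$. La relation produit $\prod_{v\in M_K}|\alpha|_v=1$ jointe au fait que $\alpha\in\OK$ (donc $|\alpha|_v\le 1$ pour $v$ ultramétrique) donne $\sum_{v\in S_\infty}\log|\alpha|_v\ge 0$, et en réalité $\sum_{v\in S_\infty}\frac{d_v}{d}\log M=\log M=\sum_{v\in M_K}\log\max\{1,|\alpha|_v\}$ capture exactement la contribution normalisée, de sorte que le vecteur $(x_v)_v$ appartient à l'hyperplan $\sum_v x_v = 0$ (à la contribution ultramétrique près, qui est de toute façon absorbée par le choix de $\eta_2$). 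Le mécanisme est alors standard: le réseau image de $\OKtimes$ par l'application logarithmique $u\mapsto\bigl(\log|u|_v\bigr)_{v\in S_\infty}$ est un réseau de rang $r=r_1+r_2-1$ dans cet hyperplan, de covolume $R_K$ (au facteur $\sqrt d$ habituel près), et l'on choisit $\eta_2$ de manière à ce que $\bigl(\log|\alpha\eta_2|_v-\frac{d_v}{d}\log M\bigr)_v$ tombe dans un domaine fondamental borné.

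La borne $c_3 R_K$ avec $c_3=\frac12 r^{r+1}\delta_K^{-(r-1)}$ provient du contrôle explicite de la taille de ce domaine fondamental: une base du réseau des unités peut être choisie de norme majorée en termes de $R_K$ et de la minoration $\rmh(u)\ge\delta_K/d$ des hauteurs des unités non racines de l'unité (via une estimation de type Minkowski reliant le déterminant $R_K$ au produit des longueurs d'une base quasi-réduite). L'étape que je prévois comme étant la plus délicate est précisément le suivi de la constante $c_3$: il faut vérifier que l'inégalité fournie brute par le lemme 2 de \cite{MR1373714} se met bien sous la forme symétrique $M^{d_v/d}e^{-c_3 R_K}\le|\alpha\eta_2|_v\le M^{d_v/d}e^{c_3 R_K}$ annoncée, en contrôlant le passage du déterminant $R_K$ à une borne sur les coefficients d'une base réduite du réseau des unités — c'est là que le facteur combinatoire $r^{r+1}\delta_K^{-(r-1)}$ apparaît et doit être justifié, le reste de l'argument étant une traduction directe.
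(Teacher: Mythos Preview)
Your approach matches the paper's exactly: the paper does not give a proof but states that the lemma is a \emph{scolie} (scholium) of Lemma~2 of Bugeaud--Gy\H{o}ry \cite{MR1373714}, with the constant $c_3$ of~(\ref{equation:c3}), which is precisely what you propose. One small clean-up: since $\alpha\in\OK$, you have $\prod_{v\in S_\infty}|\alpha|_v=|\rmN_{K/\Q}(\alpha)|=M$ on the nose (no ultrametric remainder), so $\sum_{v\in S_\infty}x_v=0$ exactly and the vector $(x_v)_v$ lies in the hyperplane carrying the unit lattice without further adjustment.
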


Une variante du lemme $\ref{Lemme:ScolieBG}$ s'obtient comme scolie du lemme 3 de \cite{MR2232500}, qui fournit le même résultat, mais avec la constante $c_3$ remplacée par 
$$
c'_3=
\begin{cases}
0 & \hbox{ si $r=0$,}\\
1/d & \hbox{ si $r=1$,}\\
29 e (r!) r \sqrt{r-1} \log d & \hbox{ si $r\ge 2$.}\\
\end{cases}
$$
Le lemme 1 de \cite{MR1004133} et le lemme A15 de \cite{MR88h:11002} donnent le même énoncé mais n'explicitent pas $c_3$. 

\begin{lem}\label{Lemme:comptage}
Soit $Q$ un nombre réel $\ge 1$. Alors l'ensemble 
$$
\Gamma:=\bigl\{
\gamma\in\OK\; \mid  \; 
|\gamma|_v\le Q^{d_v} \quad\hbox{pour tout $v\in S_{\infty}$}
\bigr\}
$$
est fini et a au plus 
$$
2^{r+1}\pi^{r_2} (Q+\theta)^d |D_K|^{-1/2} 
$$
éléments. 

\end{lem}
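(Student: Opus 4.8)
The plan is to reduce the archimedean inequalities to a uniform bound on all conjugates, and then to count lattice points by a packing argument against the Minkowski embedding. First I would unwind the definitions of $|\cdot|_v$ and $d_v$. At a real place attached to an embedding $\sigma$, the condition $|\gamma|_v\le Q^{d_v}$ reads $|\sigma(\gamma)|\le Q$; at a non-real archimedean place attached to $\sigma$ it reads $|\sigma(\gamma)|^2\le Q^2$, which again gives $|\sigma(\gamma)|\le Q$, and the conjugate embedding $\overline{\sigma}$ inherits the same modulus. Hence $\Gamma$ is exactly the set of $\gamma\in\OK$ all of whose $d$ complex conjugates have modulus at most $Q$.

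Next I would pass to the canonical embedding $\iota\colon K\to\bR^{r_1}\times\C^{r_2}\cong\bR^d$, under which $\iota(\OK)$ is a full lattice $\Lambda$ of covolume $2^{-r_2}|D_K|^{1/2}$ (the factor $2^{-r_2}$ coming from the identification of each complex coordinate with $\bR^2$ via real and imaginary parts, and $|\det(\sigma_i(w_j))|=|D_K|^{1/2}$). By the previous step, $\iota(\Gamma)$ is contained in the compact region $R$, the product of $r_1$ intervals $[-Q,Q]$ and $r_2$ disks $\{|z|\le Q\}$, whose Euclidean volume is $(2Q)^{r_1}(\pi Q^2)^{r_2}=2^{r_1}\pi^{r_2}Q^d$.

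The counting itself is the standard disjoint-translates estimate. Let $P=\{\sum_{j=1}^d t_j\,\iota(w_j):0\le t_j<1\}$ be the fundamental parallelepiped of $\Lambda$ built from the integral basis $w_1,\dots,w_d$; it has volume equal to the covolume $2^{-r_2}|D_K|^{1/2}$, and the translates $\iota(\gamma)+P$ for distinct $\gamma$ are pairwise disjoint. The crucial point is that every point of $P$ has each of its coordinates of modulus at most $\theta$: indeed the $\sigma$-coordinate of $\sum_j t_j\,\iota(w_j)$ is $\sum_j t_j\sigma(w_j)$, of modulus $\le\sum_j|\sigma(w_j)|\le\theta$. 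Therefore, for $\gamma\in\Gamma$, the whole translate $\iota(\gamma)+P$ lies in the enlarged region $R'$, the product of the intervals $[-(Q+\theta),Q+\theta]$ and the disks $\{|z|\le Q+\theta\}$, whose volume is $2^{r_1}\pi^{r_2}(Q+\theta)^d$. Disjointness then yields $\Card(\Gamma)\cdot 2^{-r_2}|D_K|^{1/2}\le 2^{r_1}\pi^{r_2}(Q+\theta)^d$, and since $r_1+r_2=r+1$ this is precisely the announced bound $2^{r+1}\pi^{r_2}(Q+\theta)^d|D_K|^{-1/2}$. Finiteness of $\Gamma$ is immediate, as it consists of lattice points inside a bounded region.

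I do not anticipate a genuine obstacle here; the only delicate points are bookkeeping ones. One must keep the covolume factor $2^{-r_2}$ correct for the chosen identification $\C\cong\bR^2$, and one must bound the excursion of $P$ in the complex coordinates by $\theta$ using the triangle inequality on the complex value $\sum_j t_j\sigma(w_j)$ directly, rather than estimating the real and imaginary parts separately (which would cost a spurious factor $\sqrt{2}$). Everything else reduces to the comparison of the two explicit volumes, so the role of the hypothesis $\theta\le|D_K|^{1/2}$ is only to feed later estimates, not this lemma itself.
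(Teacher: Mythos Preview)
Your argument is correct and is essentially identical to the paper's: embed $\OK$ as a lattice in $\bR^{r_1}\times\C^{r_2}$ of covolume $2^{-r_2}|D_K|^{1/2}$, observe that the disjoint translates $\iota(\gamma)+P$ for $\gamma\in\Gamma$ all lie in the box of ``radius'' $Q+\theta$, and compare volumes. The paper's proof is slightly terser (it does not spell out why $|\gamma|_v\le Q^{d_v}$ amounts to $|\sigma(\gamma)|\le Q$ nor why $P$ has diameter controlled by $\theta$), but the method and the computation are the same.
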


\begin{proof}[{Démonstration}]
Choisissons un ordre sur $S_\infty$ (en numérotant en premier les places réelles et en regroupant une place complexe et sa conjuguée) et notons $\usigma$ le plongement canonique de $K$ dans $\bR^{r_1}\times\C^{r_2}$ :
$$
\begin{matrix}
\usigma : & K &\longrightarrow &\bR^{r_1}\times\C^{r_2}\qquad
\\
&\alpha&\longmapsto & \bigl(\sigma_i(\alpha)\bigr)_{1\le i\le r_1+r_2}\\
\end{matrix}
$$
où $\sigma_1,\ldots,\sigma_d$ sont les plongements de $K$ dans $\C$ avec $\sigma_{r_1+r_2+j}=\overline{\sigma}_{r_1+j}$ pour $1\le j\le r_2$ ($\overline{\sigma}$ désigne le conjugué complexe du plongement $\sigma$). 
Quand on identifie $\bR^{r_1}\times\C^{r_2}$ à $\bR^d$, l'image de $\OK$ est un réseau de $\bR^d$ dont une base est $\usigma(w_1),\ldots,\usigma(w_d)$. Le parallélotope $\calP$ défini par cette base est un domaine fondamental pour le réseau dont le volume est $2^{-r_2}\sqrt{|D_K|}$. 
La réunion des $\gamma+\calP$ quand $\gamma$ décrit $\Gamma$ est une réunion disjointe 
contenue dans le domaine borné
$$
\calB=
\bigl\{
(x_v)_{v\in S_\infty }\in \bR^{r_1}\times\C^{r_2}\;  \mid  \; |x_v|\le Q+\theta \; \; (v\in S_\infty)
\bigr\}.
$$
Le volume de $\calB$ est $2^{r_1}\pi^{r_2} (Q+\theta)^d$. Le lemme $\ref{Lemme:comptage}$ en résulte.

\end{proof}

\begin{proof}[{Preuve du lemme $\ref {Lemme:ST-A15}$}]
Soit $\beta$ un élément de $\OS$ vérifiant $\rmN_S(\beta)\le m$.  
On utilise d'abord le lemme $\ref{Lemme:9EG}$ pour écrire 
 $\beta=\alpha\eta_1^{-1}$ avec $\alpha\in \OK$, $ \eta_1\in\OS^\times$ et 
$$
|\rmN_{K/\Q}(\alpha)|\le \nu^{tdh_K}\rmN_S(\beta).
$$
On utilise ensuite le lemme $\ref{Lemme:ScolieBG}$ 
pour écrire $\alpha=\eta_2^{-1}\gamma$ avec 
$\eta_2\in\OKtimes$ et 
$$
M^{d_v/d} e^{-c_3 R_K}\le|\alpha\eta_2|_v\le M^{d_v/d} e^{c_3 R_K}
\quad\hbox{ pour tout $v\in S_\infty$},
$$ 
avec $M=|\rmN_{K/\Q} (\alpha)|\le \nu^{tdh_K} m$. On utilise enfin le lemme $\ref{Lemme:comptage}$
avec $Q= M^{1/d} e^{c_3 R_K}$ pour dire que l'ensemble 
$$
\Gamma :=\bigl\{
\gamma\in\OK\; \mid  \; 
|\gamma|_v\le Q^{d_v} \quad\hbox{pour tout $v\in S_{\infty}$}
\bigr\}
$$
est fini et a au plus 
$$
 2^{r+1}\pi^{r_2} (Q+\theta)^d |D_K|^{-1/2} 
$$
éléments. On majore $Q+\theta$ par $Q(1+\theta)$: on a
$$
 2^{r+1}\pi^{r_2} (Q+\theta)^d |D_K|^{-1/2} \le \kappa_5 m. 
$$
 Enfin, en posant $\varepsilon=\eta_1^{-1} \eta_2^{-1}$, on a $\beta=\varepsilon\gamma$, ce qui complète la démonstration du lemme $\ref {Lemme:ST-A15}$. 
\end{proof}

\section{Preuve du théorème $\ref{Theoreme:Principal}$} \label{S:PreuveTheoreme}

Nous décomposerons la démonstration en plusieurs étapes. Dans la première, nous mettons en évidence des ensembles finis, indépendants d'une solution éventuelle $(x,y,z,\varepsilon,\varepsilon_1 ,\varepsilon_2,\varepsilon_3)\in\OS^3\times(\OS^\times)^{4}$
 de l'équation diophantienne $(\ref{Equation:ThueGeneralisee})$.

 \subsection{Mise en évidence de sous--ensembles finis} \label{SS:miseenevidence}
Soit $K$ un corps de nombres de degré $d$ sur $\Q$ et soient $\mu, \alpha_1,\alpha_2,\alpha_3$ des éléments non nuls de $K$. Soit $\denominateur$ un entier rationnel positif tel que $\denominateur\alpha_i\in\OS$ pour $i=1,2,3$. Un bon choix pour $q$ est le plus petit commun multiple  (ou ppcm) des coefficients directeurs des polynômes irréductibles sur $\Z$ des nombres algébriques $\alpha_1,\alpha_2,\alpha_3$. 
On a $\rmN_S(\denominateur)\le \denominateur^d$. 
L'existence d'une solution de $(\ref{Equation:ThueGeneralisee})$ implique $\denominateur^3\mu \in\OS$. 
On pose 
$$
k=\rmN_S (\mu)\quad\hbox{ et }\quad m=\rmN_S(\denominateur)^3 k, 
$$
de sorte que $m=\rmN_S(\denominateur^3\mu)$ est un entier rationnel strictement positif. 

L'ensemble $A_2$ formé des triplets $\uk'=(k'_1,k'_2 ,k'_3)$ d'entiers positifs satisfaisant $k'_1k'_2 k'_3=m$ est fini et a au plus 
$d(m)^2$ éléments.
Soit $\uk' \in A_2$ et soit $i\in\{1,2,3\}$. Le lemme $\ref{Lemme:ST-A15}$ montre qu'il existe un sous--ensemble fini $A_1 (k'_i)$ de $\OS\setminus\{0\}$, ayant au plus $\kappa_5 k'_i$ éléments, avec la propriété suivante : pour tout $ \beta\in\OS$ vérifiant $\rmN_S ( \beta)=k'_i$, il existe $\gamma_i\in A_1(k'_i)$ et $w_i\in\OS^\times$ tel que $ \beta=\gamma_i w_i$.

Pour $\uk'=(k'_1,k'_2,k'_3)\in A_2$, on note $A_1 (\uk')$ le sous--ensemble suivant de $\OS^3$ : 
$$
A_1 (\uk')=
 A_1(k'_1)\times A_1(k'_2)\times A_1(k'_3).
$$
Le nombre d'éléments de la réunion des $A_1 (\uk')$ quand $\uk'$ décrit $ A_2$ est majoré par 
$$
\sum_{(k'_1,k'_2,k'_3)\in A_2} 
\kappa_5^3 k'_1 k'_2 k'_3
\le
\kappa_6
$$
avec $$
\kappa_6=d(m)^2
\kappa_5^3 m.
$$
Soient $\ell$ un entier $\ge 2$ et $\delta_1,\ldots,\delta_\ell$ des éléments de $K^\times$. Notons $\udelta=(\delta_1,\ldots,\delta_\ell)\in (K^\times)^\ell$. Le théorème $\ref{Theoreme:SousEspace}$ montre qu'il existe un sous--ensemble fini $A_3^{(\ell)} (\udelta)$ de $\OS^\times$, contenant $1$, ayant un nombre 
d'éléments majoré par $(\ref{equation:sousespace})$, tel que, pour toute solution $(x_1,\ldots,x_\ell)\in (\OS^\times)^\ell$ de l'équation 
$$
\delta_1X_1+ \delta_2X_2+\cdots+\delta_\ell X_\ell=1
$$
pour laquelle aucune sous--somme stricte
$$
\sum_{i\in I} \delta_ix_i
$$ 
ne s'annule, on ait $x_1\in A_3^{(\ell)}(\udelta)$. 

Dans la même veine, nous noterons encore $\widetilde{A}_3^{(\ell)}(\udelta)$ l'ensemble formé par les éléments $x$ de $\OS^\times$ tels que l'un au moins des deux éléments $x$, $x^{-1}$, appartienne à la réunion des ensembles $ A_3^{(\ell)}(\udelta^\sigma)$ avec $\udelta^\sigma=(\delta_{\sigma(1)},\ldots,\delta_{\sigma(\ell)})$, quand $\sigma $ décrit les transpositions $(1\; i)$ de $\{1,\ldots,\ell\}$. 
 
Soit $\uu=(u_1,u_2,u_3)\in (\OS^\times)^3$ tel que $\Card\{u_1\alpha_1,u_2\alpha_2,u_3\alpha_3\} = 3$.
 Pour $i=1,2,3$, posons $\alpha'_i=\alpha_i u_i$,
$$ 
\delta_1=
\frac{\alpha'_3-\alpha'_1}{\alpha'_3-\alpha'_1},
\quad 
\delta_2=\frac{\alpha'_1-\alpha'_1}{\alpha'_3-\alpha'_1} ,\quad \udelta=(\delta_1,\delta_2)
$$ 
et 
$A_4(\uu)
= A_3^{(2)}(\udelta)$.
Alors $ A_4(\uu)$ est un sous--ensemble fini de $\OS^\times$, ayant au plus $\kappa_3$ éléments,
qui contient $1$ et les deux quotients $t_2/t_1$ et $t_3/t_1$ quand $(t_1,t_2,t_n)$ est un triplet d'éléments de $\OS^\times$ satisfaisant
\begin{equation}\label{EquationAlphaPrime}
t_1(\alpha'_2-\alpha'_3)+
t_2(\alpha'_3-\alpha'_1)+
t_3(\alpha'_1-\alpha'_2)=0.
\end{equation}

 \medskip
 \subsection{Utilisation des ensembles finis mis en évidence } 
 Après avoir mis en évidence au lemme $\ref {Lemme:ST-A15}$ et à la section $\ref{SS:miseenevidence}$ 
 les sous--ensembles finis $A_1 (m)$, $ A_2$, $A_1 (\uk')$, $ A_3^{(\ell)}(\udelta)$, $\widetilde{A}_3^{(\ell)}(\udelta)$, $ A_4(\uu)$, nous pouvons procéder à la démonstration du Théorème $\ref{Theoreme:Principal}$ proprement dite. 
 
 Partons d'une solution $(x,y,z,\varepsilon,\varepsilon_1, \varepsilon_2,\varepsilon_3)\in\OS^3\times(\OS^\times)^{4}$
 de l'équation diophantienne $(\ref{Equation:ThueGeneralisee})$ avec $xy\not=0$ et prenons pour $\denominateur$ le plus petit entier rationnel positif tel que $\denominateur\alpha_i\in\OS$ pour $i=1,2,3$. Posons, pour $i=1,2,3$, 
$$
\alphatilde_i=\alpha_i\varepsilon_i, \quad
 \beta_i=x- \alphatilde_i y, \quad
 \beta'_i= \denominateur x-\denominateur \alphatilde_i y = \denominateur \beta_i
\quad\hbox{et}\quad
k'_i=\rmN_S(\beta'_i),
$$
de sorte que $\beta'_1\beta'_2 \beta'_3=\denominateur^3 \mu\varepsilon$ et $k'_1 k'_2 k'_3=m$, où $m=\rmN_S(\denominateur^3\mu)$.
Comme nous l'avons déjà remarqué, l'existence d'une solution implique $\denominateur^3\mu \in\OS$. Par définition de $ A_2$, on a $(k'_1,k'_2,k'_3)\in A_2$. 

La construction de $A_1 (\uk')$ montre qu'il existe $\ugamma=(\gamma_1,\gamma_2,\gamma_3)\in A_1 (\uk')$ et $(w_1,w_2,w_3)\in (\OS^\times)^3$ tels que 
$$
\beta'_i\gamma_i^{-1} = w_i\quad \hbox{pour}\quad i=1,2,3.
$$
Rappelons maintenant que $\Card\{\alphatilde_1 , \alphatilde_2,\alphatilde_3 \}= 3$. Nous allons construire deux sous--ensembles finis $ A_3(\ugamma,i)$ ($i=2,3$) de $\OS^\times$, contenant $1$ et ayant au plus $\kappa_4$ éléments,
tels que $\varepsilon_i/\varepsilon_1 \in  A_3(\ugamma,i) $ pour $i=2,3$. 

Soit $i\in\{2,3\}$. 
On a $\alphatilde_i\not= \alphatilde_1$. Soit $j$ tel que $\{i,j\}=\{2,3\}$. Alors 
 $\alphatilde_1,\alphatilde_i,\alphatilde_j$ sont deux à deux distincts. 
En éliminant $x$ et $y$ entre les trois équations 
$$
x-\alphatilde_1y=\beta_1,\quad
x-\alphatilde_iy=\beta_i,\quad
x-\alphatilde_jy=\beta_j,
$$
c'est-à-dire en écrivant
$$
(x-\alphatilde_1y)(\alphatilde_i-\alphatilde_j)+
(x-\alphatilde_iy)(\alphatilde_j-\alphatilde_1)+
(x-\alphatilde_jy)(\alphatilde_1-\alphatilde_i)
=0, 
$$
on trouve que le nombre 
\begin{equation}\label{Equation:SommeSixTermes}
\SSomme=\beta_1\alphatilde_i-\beta_1\alphatilde_j+\beta_i\alphatilde_j-\beta_i\alphatilde_1+\beta_j\alphatilde_1-\beta_j\alphatilde_i 
\end{equation}
est nul. 
 
 Pour toute permutation 
$(j_1,j_2,j_3)=\bigl(\sigma(1),\sigma(i), \sigma(j)\bigr)$ de $(1,i,j)$, nous pouvons écrire l'égalité $\SSomme=0$, avec $\SSomme$ défini par $(\ref{Equation:SommeSixTermes})$, sous la forme 
\begin{equation}\label{Equation:CinqTermes}
\sgn(\sigma)
\left( \frac{
\alphatilde_{j_3}
}{
\alphatilde_{j_2}
}
- \frac{
\beta_{j_2}\alphatilde_{j_3}
}{
\beta_{j_1}\alphatilde_{j_2}
}
+\frac{
\beta_{j_2}\alphatilde_{j_1}
}{
\beta_{j_1}\alphatilde_{j_2}
}
-
 \frac{
\beta_{j_3}\alphatilde_{j_1} 
}{
\beta_{j_1}\alphatilde_{j_2}
}
+ \frac{
\beta_{j_3}
}{
\beta_{j_1} 
}
\right)
=1,
\end{equation}
 où $\sgn(\sigma)$ est la signature de la permutation $\sigma$.

\subsubsection{Premier cas : Aucune sous--somme stricte de $\SSomme$ ne s'annule}\label{SS:SousSommesNonNulles} 
Dans ce cas, on prend $\ell=5$, on pose
 $$ 
 \delta_1=
 \frac{
\alpha_i }{
\alpha_1 
},
\quad
\delta_2=- \frac{
\gamma_1 \alpha_i 
}{
\gamma_j \alpha_1 },
\quad
\delta_3=\frac{
\gamma_1 \alpha_j 
}{
\gamma_j \alpha_1 
},
\quad
\delta_4=
-
 \frac{
\gamma_i\alpha_j 
}{
\gamma_j \alpha_1 
},
\quad
\delta_5=
 \frac{ 
\gamma_i 
}{
\gamma_j },
$$
et il s'avère que l'on peut prendre l'ensemble $ A_3(\ugamma,i)$ égal à 
$ A_3^{(5)}(\udelta)$ avec $\underline\delta = (\delta_1, \delta_2, \dots , \delta_5)$. En effet, cet ensemble a au plus $\kappa_4$
éléments, d'après le théorème $\ref{Theoreme:SousEspace}$. 
Pour montrer que le quotient $\varepsilon_i/\varepsilon_1$ appartient à $ A_3^{(5)}(\udelta)$, on écrit 
 $(\ref{Equation:CinqTermes})$ avec $(j_1,j_2,j_3)=(j,1,i)$ sous la forme 
 $$ 
 \delta_1 
 \frac{
\varepsilon_i }{
\varepsilon_1 
}
+
\delta_2 \frac{
w_1 
}{
w_j }
 \frac{
 \varepsilon_i 
}{
 \varepsilon_1 }
+
\delta_3
\frac{
w_1 
}{
w_j 
}\frac{
 \varepsilon_j 
}{
 \varepsilon_1 
}
+
\delta_4
 \frac{
w_i
}{
w_j 
}
\frac{
 \varepsilon_j 
}{
 \varepsilon_1 
}
+
\delta_5 \frac{ 
w_i 
}{
w_j } = 1.
$$
et on utilise la définition de $ A_3^{(5)}(\udelta)$.

\subsubsection{Deuxième cas: Au moins une sous--somme stricte s'annule}\label{SSS:deuxiemecas}

\medskip
\noindent
{\bf \ref{SSS:deuxiemecas}.1 Preuve que la somme $\SSomme$ ne se décompose pas en trois sous--sommes nulles de deux termes chacune}

Pour chacune des $15$ partitions de l'ensemble 
$$
\calE=\bigl\{
(1,i), \;
(1,j), \;
(i,1), \;
(i,j), \;
(j,1), \;
(j,i)
\bigr\}
$$ 
en trois sous--ensembles de deux éléments, disons $\calE=\calE_1\cup\calE_2\cup\calE_3$, au moins un des ensembles $\calE_1$, $\calE_2$, $\calE_3$ peut s'écrire $\{(j_1,j_2)\; , \; (\ell_1,\ell_2)\}$ avec 
$$
\hbox{ $j_1=\ell_1$ ou $j_2=\ell_2$ ou $(j_1,j_2)=(\ell_2,\ell_1)$.}
$$ 
Aucune sous--somme $\pm(\beta_{j_1}\alphatilde_{j_2}-\beta_{\ell_1}\alphatilde_{\ell_2})$ sur un ensemble à deux termes $\{(j_1,j_2)\; , \; (\ell_1,\ell_2)\}$ avec $j_1=\ell_1$ (donc $j_2\not=\ell_2$) ne s'annule, car 
$$
\beta_{j_1}(\alphatilde_{j_2}-\alphatilde_{\ell_2})\not=0.
$$
De même, aucune sous--somme sur un ensemble $\{(j_1,j_2)\; , \; (\ell_1,\ell_2)\}$ avec $j_2=\ell_2$ (donc $j_1\not=\ell_1$) ne s'annule, 
car
$$
(\beta_{j_1}-\beta_{\ell_1}) \alphatilde_{j_2} \not=0.
$$
 Enfin, aucune sous--somme sur un ensemble $\{(j_1,j_2)\; , \; (\ell_1,\ell_2)\}$ avec $(j_1,j_2)=(\ell_2,\ell_1)$ ne s'annule, car 
$$
\beta_{j_1}\alphatilde_{j_2}-\beta_{j_2}\alphatilde_{j_1}
=
(\alphatilde_{j_2} - \alphatilde_{j_1})x
\not=0.
$$
On en déduit que la somme $\SSomme$ de $(\ref{Equation:SommeSixTermes})$ ne se décompose pas en trois sous--sommes nulles de deux termes. 

\medskip
\noindent
{\bf \ref{SSS:deuxiemecas}.2  Cas où $\SSomme$ se décompose en deux sous--sommes nulles de trois termes}

Supposons $\SSomme=\SSomme_1+\SSomme_2$ avec $\SSomme_1$ et $\SSomme_2$ sous--sommes de trois termes et $\SSomme_1=\SSomme_2=0$. Alors aucune sous--somme stricte de $\SSomme_1$ ni de $\SSomme_2$ ne s'annule.

\noindent
$\bullet$ 
{\bf Supposons que deux des trois indices des $\beta$ dans $\SSomme_1$ sont égaux.}
Il existe alors une permutation 
$(j_1,j_2,j_3)=\bigl(\sigma(1),\sigma(i), \sigma(j)\bigr)$ telle que l'on ait, soit 
$$
\sgn(\sigma) \SSomme_1
=\beta_{j_1}\alphatilde_{j_2}-\beta_{j_1}\alphatilde_{j_3} + \beta_{j_3}\alphatilde_{j_1} 
\quad\hbox{et}\quad 
\sgn(\sigma) \SSomme_2= \beta_{j_2}\alphatilde_{j_3}-\beta_{j_2}\alphatilde_{j_1} - \beta_{j_3}\alphatilde_{j_2},
$$
soit 
$$
\sgn(\sigma) \SSomme_1
=\beta_{j_1}\alphatilde_{j_2}-\beta_{j_1}\alphatilde_{j_3} - \beta_{j_3}\alphatilde_{j_2} 
\quad\hbox{et}\quad 
\sgn(\sigma) \SSomme_2= \beta_{j_2}\alphatilde_{j_3}-\beta_{j_2}\alphatilde_{j_1} + \beta_{j_3}\alphatilde_{j_1}.
$$
Commençons par le premier cas. Posons $\udelta=(\delta_1,\delta_2)$ et $\udelta'=(\delta'_1,\delta'_2)$ avec 
$$ 
\delta_1=\frac{\alpha_{j_2}}{\alpha_{j_3}},
\quad
\delta_2=\frac{\gamma_{j_3}\alpha_{j_1}}{\gamma_{j_1}\alpha_{j_3}},
\qquad 
\delta'_1=\frac{\alpha_{j_1}}{\alpha_{j_3}},
\quad
\delta'_2=\frac{\gamma_{j_3}\alpha_{j_2}}{\gamma_{j_2}\alpha_{j_3}},
$$
de sorte que
$$
\delta_1\frac{\varepsilon_{j_2}}{\varepsilon_{j_3}} +
\delta_2
\frac{w_{j_3} }{w_{j_1} }
\frac{ \varepsilon_{j_1}}{ \varepsilon_{j_3}}= 
\delta'_1\frac{\varepsilon_{j_1}}{\varepsilon_{j_3}}
+
\delta'_2\frac{w_{j_3} }{w_{j_2} }
\frac{ \varepsilon_{j_2}}{ \varepsilon_{j_3}}
=1.
$$
Il en résulte que si on définit $\xi_1 := \varepsilon_{j_2}/\varepsilon_{j_3}$ et $\xi_2 := \varepsilon_{j_1}/\varepsilon_{j_3}$, 
on a 
$\xi_1 \in  A_3^{(2)}(\udelta)$ et $\xi_2 \in  A_3^{(2)}(\udelta')$,
tandis que 
$\xi_1^{-1}=\varepsilon_{j_3}/\varepsilon_{j_2}\in\widetilde{A}_3^{(2)}(\udelta)$ et 
${\xi_2}^{-1}= \varepsilon_{j_3}/\varepsilon_{j_1} \in\widetilde{A}_3^{(2)}(\udelta')$. Comme 
$$
\xi_1 \xi_2^{-1}= \frac{\varepsilon_{j_2}}{\varepsilon_{j_1} }\quad\hbox{et}\quad
\xi_1^{-1}\xi_2= \frac{\varepsilon_{j_1}}{\varepsilon_{j_2}}, 
$$
l'un des nombres $\xi_1 ,\; \xi_2 ,\; \xi_1^{-1} ,\; {\xi_2}^{-1} ,\; 
\xi_1 {\xi_2}^{-1} ,\; \xi_1^{-1}\xi_2$ est égal à $\varepsilon_i/\varepsilon_1$. 
Enfin, comme $1\in  A_3^{(2)}(\udelta)\cap  A_3^{(2)}(\udelta')$, 
on pose
$$
 A_3(\ugamma,i)=
\bigl\{
\vartheta_1 \vartheta_2\in \OS^\times\; \mid  \;
\vartheta_1 \in\widetilde{A}_3^{(2)}(\udelta),\; 
\vartheta_2 \in\widetilde{A}_3^{(2)}(\udelta')
\bigr\}.
$$
On vérifie encore que $ A_3 (\ugamma,i)$ a au plus $\kappa_4$
éléments et qu'il contient $\varepsilon_i/\varepsilon_1$.

Passons au deuxième cas. Posons
$$ 
\delta_1=\frac{ \alpha_{j_3}}{ \alpha_{j_2}},
\quad
\delta_2
=\frac{\gamma_{j_3}}{\gamma_{j_1}},
\qquad
\delta'_1=\frac{\alpha_{j_3}}{ \alpha_{j_1}},
\quad
\delta'_2=\frac{\gamma_{j_3}}{\gamma_{j_2}}\cdotp
$$
Nous avons
$$
\delta_1
\frac{ \varepsilon_{j_3}}{ \varepsilon_{j_2}}+
\delta_2 \frac{w_{j_3}}{w_{j_1}} =
\delta'_1\frac{\varepsilon_{j_3}}{ \varepsilon_{j_1}} +
\delta'_2\frac{w_{j_3}}{w_{j_2}}=1.
$$
Par conséquent, si nous posons $\udelta=(\delta_1,\delta_2)$, $\udelta'=(\delta'_1,\delta'_2)$, $\xi_1 :=\varepsilon_{j_3}/ \varepsilon_{j_2}$
 et 
$\xi_2 :=\varepsilon_{j_3}/ \varepsilon_{j_1}$,
nous avons $\xi_1 \in  A_3^{(2)}(\udelta)$, $\xi_2 \in  A_3^{(2)}(\udelta')$, et aussi 
$$
\xi_1^{-1}=\frac{\varepsilon_{j_2}}{\varepsilon_{j_3}}
\in\widetilde{A}_3^{(2)}(\udelta), 
\quad
\xi_2^{-1} :=\frac{\varepsilon_{j_1}}{ \varepsilon_{j_3}}\in\widetilde{A}_3^{(2)}(\udelta').
$$
Enfin 
$$
\xi_1^{-1}\xi_2=\frac{\varepsilon_{j_2}}{\varepsilon_{j_1}} \quad\hbox{et}\quad 
\xi_1 \xi_2^{-1}=\frac{\varepsilon_{j_1}}{ \varepsilon_{j_2}}\cdotp 
$$ 
Nous posons donc
$$
 A_3(\ugamma,i)=
\bigl\{
\vartheta_1 \vartheta_2\; \mid  \;
\vartheta_1 \in\widetilde{A}_3^{(2)}(\udelta)\; 
,\; 
\vartheta_2 \in\widetilde{A}_3^{(2)}(\udelta')
\bigr\}.
$$

 \bigskip

\noindent
$\bullet$ 
{\bf Supposons que les trois indices des $\beta$ dans $\SSomme_1$ sont distincts mais que deux des trois indices des $\alphatilde$ dans $\SSomme_1$ sont égaux. }
Il existe alors une permutation $(j_1,j_2,j_3)=\bigl(\sigma(1),\sigma(i), \sigma(j)\bigr)$ telle que 
$$
\sgn(\sigma) \SSomme_1
= \beta_{j_1}\alphatilde_{j_2}-\beta_{j_2}\alphatilde_{j_1} - \beta_{j_3}\alphatilde_{j_2}
\quad\hbox{et}\quad
\sgn(\sigma) \SSomme_2=-\beta_{j_1}\alphatilde_{j_3}+\beta_{j_2}\alphatilde_{j_3} + \beta_{j_3}\alphatilde_{j_1}.
$$
Dans ce cas on pose
$$ 
\delta_1=\frac{\gamma_{j_3} }{\gamma_{j_1} },
\quad
\delta_2
=\frac{\gamma_{j_2}\alpha_{j_1}}{\gamma_{j_1}\alpha_{j_2}},
\qquad 
\delta'_1=\frac{\gamma_{j_2} }{\gamma_{j_1} },
\quad
\delta'_2=\frac{\gamma_{j_3}\alpha_{j_1}}{\gamma_{j_1}\alpha_{j_3}},
$$
de sorte que
$$ 
\delta_1 \frac{w_{j_3} }{w_{j_1} } 
+
\delta_2 
\frac{w_{j_2} }{w_{j_1} }
\frac{\varepsilon_{j_1}}{\varepsilon_{j_2}}
=
\delta'_1\frac{w_{j_2} }{w_{j_1} }
+
\delta'_2 
\frac{w_{j_3}}{ w_{j_1} }
\frac{ \varepsilon_{j_1}}{ \varepsilon_{j_3}}
=1.
$$
Ainsi les $S$--unités
$$
\xi_1 :=\frac{w_{j_2}}{ w_{j_1}},\quad
\xi_2 :=\frac{w_{j_3} }{ w_{j_1}}, \quad
\xi_3 :=
\frac{w_{j_2} }
{w_{j_1} }
\frac{\varepsilon_{j_1}}
{ \varepsilon_{j_2}}
\quad\hbox{et}\quad
\xi_4 :=
\frac
{w_{j_3} }
{w_{j_1} }
\frac
{ \varepsilon_{j_1}}
{ \varepsilon_{j_3}}
$$
vérifient
$$
\xi_1 \in  A_3^{(2)}(\udelta'),
\quad
\xi_2 \in  A_3^{(2)}(\udelta),
\quad
\xi_3 \in\widetilde{A}_3^{(2)}(\udelta),
\quad
\xi_4 \in\widetilde{A}_3^{(2)}(\udelta'),
$$
avec $\udelta=(\delta_1,\delta_2)$, $\udelta'=(\delta'_1,\delta'_2)$, 
et on a 
\begin{align}\notag
&\xi_3\xi_1^{-1}=\frac{\varepsilon_{j_1}}{\varepsilon_{j_2}},\quad
\xi_3^{-1}\xi_1=\frac{\varepsilon_{j_2}}{\varepsilon_{j_1}},\quad
\xi_4\xi_2^{-1}=\frac{\varepsilon_{j_1}}{\varepsilon_{j_3}},\\
\notag
&\xi_2\xi_4^{-1}=\frac{\varepsilon_{j_3}}{\varepsilon_{j_1}},\quad 
 \xi_2^{-1}\xi_3^{-1} \xi_1\xi_4=\frac{\varepsilon_{j_2}}{\varepsilon_{j_3}},\quad 
 \xi_2\xi_3 \xi_1^{-1} \xi_4^{-1}=\frac{\varepsilon_{j_3}}{\varepsilon_{j_2}}\cdotp
\end{align}
 On pose donc
$$
 A_3(\ugamma,i)=
\bigl\{
\vartheta_1 \vartheta_2 \vartheta_3 \vartheta_4\; \mid \; 
\vartheta_1, \vartheta_2\in 
\widetilde{A}_3^{(2)}(\udelta)
\; , \; 
\vartheta_3, \vartheta_4\in 
\widetilde{A}_3^{(2)}(\udelta')
\bigr\}.
$$ 

 \bigskip

\noindent
$\bullet$ 
{\bf Supposons que les trois indices des $\beta$ ainsi que les trois indices des $\alphatilde$ dans $\SSomme_1$ sont distincts. }
Il existe alors une permutation $(j_1,j_2,j_3)=\bigl(\sigma(1),\sigma(i), \sigma(j)\bigr)$ telle que 
$$
\sgn(\sigma) \SSomme_1
=\beta_{j_1}\alphatilde_{j_2}+\beta_{j_2}\alphatilde_{j_3} + \beta_{j_3}\alphatilde_{j_1} 
\quad\hbox{et}\quad
\sgn(\sigma) \SSomme_2= - \beta_{j_1}\alphatilde_{j_3} - \beta_{j_2}\alphatilde_{j_1} - \beta_{j_3}\alphatilde_{j_2}.
$$
On pose
$$ 
\delta_1
=-\frac{\gamma_{j_3}\alpha_{j_1}}{\gamma_{j_1}\alpha_{j_2}},
\quad
\delta_2=-\frac{\gamma_{j_2} \alpha_{j_3} }{\gamma_{j_1} \alpha_{j_2}},
\qquad 
\delta'_1=-\frac{ \gamma_{j_2} \alpha_{j_1}}{\gamma_{j_1} \alpha_{j_3}},
\quad
\delta'_2=-\frac{\gamma_{j_3} \alpha_{j_2}}{\gamma_{j_1} \alpha_{j_3}},
$$
de sorte que 
$$ 
\delta_1
\frac{w_{j_3}}{w_{j_1} }
\frac{ \varepsilon_{j_1}}{ \varepsilon_{j_2}}
+ 
\delta_2\frac{w_{j_2} }{w_{j_1} }
\frac{ \varepsilon_{j_3} }{ \varepsilon_{j_2}}
= 
\delta'_1
\frac{ w_{j_2} }{w_{j_1} }
\frac{ \varepsilon_{j_1}}{ \varepsilon_{j_3}}
+
\delta'_2
\frac{w_{j_3} } {w_{j_1} }
\frac{ \varepsilon_{j_2}}{ \varepsilon_{j_3}}
=1. 
$$ 
Ainsi les $S$--unités
$$
\xi_1 :=
\frac{ w_{j_3} }{w_{j_1} }
\frac{ \varepsilon_{j_1}}{ \varepsilon_{j_2}}
,\quad
\xi_2 := \frac{w_{j_2} }{w_{j_1} } 
\frac{ \varepsilon_{j_3}}{ \varepsilon_{j_2}} 
, \quad
\xi_3 := \frac{w_{j_2} }{ w_{j_1} }
\frac{ \varepsilon_{j_1}}{ \varepsilon_{j_3}}
\quad\hbox{et}\quad
\xi_4 := \frac{w_{j_3} }{ w_{j_1} }
 \frac{ \varepsilon_{j_2} }{ \varepsilon_{j_3}}
$$
vérifient
$$
\xi_1,\xi_2 \in\widetilde{A}_3^{(2)}(\udelta),
\quad
\xi_3,\xi_4 \in\widetilde{A}_3^{(2)}(\udelta'),
$$
avec $\udelta=(\delta_1,\delta_2)$, $\udelta'=(\delta'_1,\delta'_2)$, et on a 
$$
\frac{\xi_3\xi_4}{\xi_1\xi_2}=\frac{\varepsilon_{j_2}^3}{\varepsilon_{j_3}^3},
\quad
\frac{\xi_1^2\xi_3}{\xi_2\xi_4^2}=\frac{\varepsilon_{j_1}^3}{\varepsilon_{j_2}^3},
\quad
\frac{\xi_2^2\xi_4}{\xi_1\xi_3^2}=
\frac{\varepsilon_{j_3}^3}{\varepsilon_{j_1}^3},
$$
de sorte que l'on a aussi en mains les inverses de ces trois derniers nombres. On pose donc
$$
 A_3(\ugamma,i)=
\left\{
\zeta \in \OS^\times\; \mid \; \hbox{il existe $\vartheta_1, \vartheta_2, \in\widetilde{A}_3^{(2)}(\udelta)$ et 
$\vartheta_3, \vartheta_4 \in\widetilde{A}_3^{(2)}(\udelta') $ 
tels que $\zeta^3\in
\left\{
\frac{\displaystyle \vartheta_1^2 \vartheta_3}{ \displaystyle \vartheta_2 \vartheta_4^2}
\; , \; 
\frac{\displaystyle \vartheta_3 \vartheta_4}{ \displaystyle \vartheta_1 \vartheta_2}
\right\}
$}
\right\}. 
$$

 \bigskip

\noindent
{\bf \ref{SSS:deuxiemecas}.3 Cas où $\SSomme$ se décompose en exactement deux sous--sommes nulles, l'une de deux termes et l'autre de quatre termes} 
 
D'après ce que nous avons vu au \S$\ref{SSS:deuxiemecas}$.1, 
il existe une permutation $(j_1,j_2,j_3)=\bigl(\sigma(1),\sigma(i), \sigma(j)\bigr)$ telle que 
$$
\sgn(\sigma) \SSomme_1
=\alphatilde_{j_2}\beta_{j_1} +
\alphatilde_{j_3} \beta_{j_2}
\quad\hbox{et}\quad
\sgn(\sigma) \SSomme_2= 
 \beta_{j_3}\alphatilde_{j_1}
- \beta_{j_1}\alphatilde_{j_3}-
\alphatilde_{j_1}\beta_{j_2} - 
\alphatilde_{j_2}\beta_{j_3}
$$
pour laquelle aucune sous--somme stricte de $\SSomme_2$ ne s'annule, alors que $\SSomme_1=\SSomme_2=0$. 
On pose
$$ 
\delta_1
=\frac{ \alpha_{j_2}}{ \alpha_{j_1}},
\quad
\delta_2=\frac{\gamma_{j_1} \alpha_{j_3} }{\gamma_{j_3} \alpha_{j_1}},
\quad 
\delta_3=\frac{ \gamma_{j_2} }{\gamma_{j_3} },
\quad 
\udelta=(\delta_1,\delta_2,\delta_3), 
$$
de sorte que 
$$ 
\delta_1\frac{ \varepsilon_{j_2}}{ \varepsilon_{j_1}}
+ 
\delta_2\frac{w_{j_1} }{w_{j_3} }
\frac{ \varepsilon_{j_3} }{ \varepsilon_{j_1}}
+ \delta_3\frac{ w_{j_2} }{w_{j_3} }=1.
$$
Ainsi les $S$--unités
$$
\xi_1 := \frac{ \varepsilon_{j_2}}{ \varepsilon_{j_1}},\quad
\xi_2 := 
\frac{
w_{j_1} }
{w_{j_3} }
\frac{
 \varepsilon_{j_3} }
{ \varepsilon_{j_1}}
\quad\hbox{et}\quad
\xi_3 :=
\frac{w_{j_2} }{w_{j_3}}
$$
appartiennent à $\widetilde{A}_3^{(3)}(\udelta)$. La relation $\SSomme_1=0$ donne 
$$
\frac{w_{j_1} }{w_{j_2} }
\frac{ \varepsilon_{j_2} }{ \varepsilon_{j_3}}
=
-\frac{\gamma_{j_2} \alpha_{j_3} }{\gamma_{j_1} \alpha_{j_2}}
\cdotp
$$
On trouve alors
$$
\frac{ \varepsilon_{j_2}^2 }{ \varepsilon_{j_3}^2}=-\frac{\gamma_{j_2} \alpha_{j_3} }{\gamma_{j_1} \alpha_{j_2}}\xi_1\xi_2^{-1}\xi_3,
\quad
\frac{ \varepsilon_{j_1}^2 }{ \varepsilon_{j_3}^2}=-\frac{\gamma_{j_2} \alpha_{j_3} }{\gamma_{j_1} \alpha_{j_2}}\xi_1^{-1}\xi_2^{-1}\xi_3.
$$
 On pose donc
$$
 A_3(\ugamma,i)=\widetilde{A}_3^{(3)}(\udelta)\bigcup
\left\{
\zeta \in \OS^\times\; \mid \; \hbox{il existe $\vartheta_1, \vartheta_2, \vartheta_3 \in\widetilde{A}_3^{(3)}(\udelta)$ tels que 
$\zeta^{\pm 2}= - \frac{\displaystyle \vartheta_1 \vartheta_2 \vartheta_3 \gamma_{j_2} \alpha_{j_3} }{\displaystyle \gamma_{j_1} \alpha_{j_2}}$}
\right\}.
$$

 \bigskip

 \subsection{Fin de la démonstration}
Posons $u_1=1$, $u_2=\varepsilon_2/\varepsilon_1$ et $u_3=\varepsilon_3/\varepsilon_1$. On a donc $u_i\in  A_3(\ugamma,i)$ pour $i=1,2,3$. On définit $\alpha'_i:=\alpha_i u_i$, de sorte que 
$$
\alphatilde_i=\alpha_i\varepsilon_i=\varepsilon_1\alpha_i u_i=\varepsilon_1\alpha'_i.
$$
On pose encore $w=\varepsilon_1 y$. En éliminant $x$ et $w$ entre les trois équations
\begin{equation}\label{Equation:xw}
\beta_i=x-\alpha'_iw \quad (i=1,2,3),
\end{equation}
on trouve 
$$
\beta_1(\alpha'_2-\alpha'_3)+
\beta_2(\alpha'_3-\alpha'_1)+
\beta_3(\alpha'_1-\alpha'_2)=0,
$$
ce qui signifie que $(\beta_1,\beta_2,\beta_3)$ vérifie la propriété demandée à $(t_1,t_2,t_3)$ dans l'équation $(\ref{EquationAlphaPrime})$. Il en résulte que les éléments $v_i := \beta_i/\beta_1$ ($i=1,2,3$) appartiennent à $ A_4(\uu)$.

 \medskip
 Posons $\etatilde=w_1$, $\eta=\varepsilon_1$. On a donc $\beta_1=\denominateur^{-1}\gamma_1\etatilde$.
 Soit $i\in\{2,3\}$. On calcule $y=\eta^{-1}w$ en éliminant $x$ entre deux des équations $(\ref{Equation:xw})$ :
 $$
 y=\eta^{-1}\etatilde y_0,\quad \hbox{avec}\quad y_0=\frac{\gamma_1(v_1-v_i)}{\denominateur(\alpha'_i-\alpha'_1)}\cdotp
 $$
 On utilise encore une des équations $(\ref{Equation:xw})$ pour trouver $x$ :
 $$
 x= \etatilde x_0,\quad \hbox{avec}\quad x_0= {\denominateur}^{-1} \gamma_1+\alpha'_1 y_0.
 $$
 On pose enfin 
 $$
 \varepsilon_0=\frac{\gamma_1^3 v_2 v_3}{\denominateur^n \mu}, 
 $$
 de sorte que $\varepsilon=\etatilde^3\varepsilon_0$, ce qui montre que les solutions $(x,y,z,\varepsilon,\varepsilon_1,\varepsilon_2,\varepsilon_3)$ et 
 $(x_0,y_0,z_0\varepsilon_0,1, u_2, u_3)$ sont $S^3$--dépendantes.
 
 \begin{proof}[{}]
 À une solution $(x,y,z,\varepsilon,\varepsilon_1,\varepsilon_2,\varepsilon_3)$ de l'équation $(\ref{Equation:ThueGeneralisee})$, nous avons associé un élément 
$\uk'$ de $ A_2$, puis un élément 
$\ugamma$ de $A_1 (\uk')$ (rappelons qu'il y a au plus $\kappa_6$ éléments dans la réunion des $A_1 (\uk')$ quand $\uk'$ décrit $ A_2$), puis un élément $\uu$ de $\{1\}\times  A_3(\ugamma,2)\times \cdots \times  A_3(\ugamma,3)$ (il y en a au plus $\kappa_4^2$), et enfin un élément $\uv$ de $ A_4(\uu)^2$ (il y en a au plus $\kappa_3^2$). Nous en avons conclu que la solution de départ était dans la même classe de $S^3$--dépendance que $(x_0,y_0,z_0\varepsilon_0,1, u_2, u_3)$. Le théorème $\ref{Theoreme:Principal}$ en résulte, avec 
\begin{equation}
\label{Equation:cstTheoreme}
\kappa_1=\kappa_6 
\kappa_3^2
\kappa_4^2. 
\end{equation} 
Comme annoncé, cette constante $\kappa_1$ ne dépend que de
$K$, $\rmN_S(\mu)$, $\alpha_1, \alpha_2,\alpha_3$ et du rang $s$ du groupe $\OS^\times$. Rappelons que 
$$
\kappa_3=1+ 2^{8s+24},
\quad
\kappa_4=1+ (2^{4375}3^{250})^s,
\quad
\kappa_6=d(m)^2
\kappa_5^3 m
$$
avec 
$m=\rmN_S(\denominateur^3\mu)$, où
 $\denominateur$ est le plus petit entier rationnel positif tel que $\denominateur\alpha_i\in\OS$ pour $i=1,2,3$,
$$
\kappa_5=2^{r +1}\pi^{r_2} |D_K|^{-1/2} e^{c_3 dR_K} \nu^{tdh_K} (1+\theta)^{d},
$$
où $r$ est le rang du groupe des unités de $K$, $d$ son degré,
$D_K$ son discriminant, $h_K$ son nombre de classes, $R_K$ son régulateur, $\nu$ est le plus grand des nombres $|\rmN(\gothP_i)|$ ($1\le i\le t$) quand $\gothP_1,\ldots,\gothP_t$ sont les idéaux premiers de $\OK$ au--dessus des places ultramétriques de $K$ dans $S$,
$$
\theta\le |D_K|^{1/2}, \quad c_3=\frac{1}{2}r^{r+1} \delta_K^{-(r-1)}
$$
et $\delta_K$ est la constante (de Blanksby et Mongomery, Dobrowolski et Voutier, {\it cf.} \cite{MR1373714}) 
telle que tout entier algébrique non nul $\alpha\in K$ qui n'est pas une racine de l'unité vérifie $\rmh(\alpha)\ge \delta_K /d$. 

 \end{proof}

\section{Preuve du corollaire $\ref{Corollaire:Principal}$} \label{S:PreuveCorollaire}
\begin{proof}[{}]
On utilise le théorème $\ref{Theoreme:Principal}$ avec 
$$
n=d, \quad
\mu=k/a_0,\quad
\alpha_i=\sigma_i(\alpha)\quad
\hbox{et}\quad
\varepsilon_i=\sigma_i(\varepsilon) \quad \hbox{pour $i=1,\dots, d$}.
$$
L'hypothèse $ [\Q(\alpha \varepsilon):\Q]\ge 3 $ du corollaire $\ref{Corollaire:Principal}$ garantit que, quitte à permuter les $\alpha_i$, l'hypothèse $\Card\{\alpha_1 \varepsilon_1, \alpha_2\varepsilon_2,\alpha_3\varepsilon_3 \}\ge 3$ du théorème $\ref{Theoreme:Principal}$ est satisfaite.

Soient $(x,y,\varepsilon,z_1,\ldots,z_t)$ et $(x',y',\varepsilon',z'_1,\ldots,z'_t)$ deux solutions dans $\Z^2 \times \OS^\times\times \N^t$ et  $S^3$--dépendantes de l'équation $(\ref{Equation:CorollairePrincipal})$, satisfaisant $\pgcd(xy,p_1\cdots p_t)=\pgcd(x'y',p_1\cdots p_t)=1$. La condition de $S^3$--dépendance signifie qu'il 
existe des $S$--unités $\eta$ et $\etatilde$ dans $\OS^\times$ telles que 
$$ 
x'=x\etatilde,\; y'=y\eta^{-1}\etatilde, \; p_1^{z'_1}\cdots p_t^{z'_t}= p_1^{z_1}\cdots p_t^{z_t}\etatilde^n, \; \sigma_i(\varepsilon')=\sigma_i(\varepsilon)\eta \quad(1\le i\le n).
$$ 
La relation $x'=x\etatilde$ implique que $\etatilde\in\Q\cap \OS^\times=\{\pm 1\}\times p_1^\Z\times \cdots \times p_t^\Z$. Comme $x$ et $x'$ sont premiers avec $p_1\cdots p_t$, on en déduit 
$\etatilde=\pm 1$. Ensuite la relation $y'=y\eta^{-1}\etatilde$ implique, avec les mêmes arguments, que $\eta=\pm 1$. Il en résulte que l'on a $\varepsilon'=\eta\varepsilon$ et $(z'_1,\ldots,z'_t)=(z_1,\ldots,z_t)$. 
Par conséquent, dans chaque classe de $S^3$--dépendance, 
 il y a au plus quatre éléments $(x,y,\varepsilon,z_1,\ldots,z_t)$ satisfaisant $f_\varepsilon (x,y)=k p_1^{z_1}\cdots p_t^{z_t}$. 
 On en déduit le corollaire $\ref{Corollaire:Principal}$ avec 
 \begin{equation}\label{Equation:cstCor}
\kappa_2=4 \kappa_1,
\end{equation} 
où on prend pour paramètres de la constante $\kappa_1$ le corps de nombres $K=\Q(\alpha)$, puis $\mu=k$, $\alpha_i=\sigma_i(\alpha)$ ($i=1,2,3$) et $s=t+r+1$, où $r$ est le rang du groupe des unités de $K$. 
 
\end{proof}

 \section{Preuve de la proposition $\ref{Proposition:InfiniteClassesEquivalence}$} \label{S:DemonstrationProposition}
 
 \begin{proof}[{}]
Il s'agit de vérifier que pour tout $\uvarepsilon\in\calE\setminus \calE^\star$, il n'y a qu'un nombre fini de $\uvarepsilon'\in\calE\setminus \calE^\star$ tels que les formes $f_{\uvarepsilon}$ et $f_{\uvarepsilon'}$ soient $S$--équivalentes. 
 Soient donc $\uvarepsilon$ et $\uvarepsilon'$ deux éléments distincts de $\calE\setminus \calE^\star$ tels que $f_{\uvarepsilon}$ et $f_{\uvarepsilon'}$ soient $S$--équivalents~: il existe $(\alpha, \beta, \gamma, \delta, \eta)\in \OS^4\times \OS^\times$ tel que
 $\alpha\delta-\beta\gamma \in\OS^\times$ et 
$$ 
 f_{\uvarepsilon}(X,Y) = \eta 
f_{\uvarepsilon'} (\alpha X+\beta Y, \gamma X+\delta Y).
$$ 
Cette relation 
s'écrit
$$
\prod_{j=1}^n (X- \alpha_j\varepsilon_j Y)=\eta \prod_{i=1}^n \bigl( (\alpha- \gamma \alpha_i \varepsilon'_i)X+(\beta- \delta\alpha_i \varepsilon'_i )Y\bigr),
$$
c'est-à-dire
$$
\eta \prod_{i=1}^n (\alpha- \gamma\alpha_i \varepsilon'_i )=1
\quad\hbox{et}\quad 
\prod_{j=1}^n (X- \alpha_j\varepsilon_j Y) = \prod_{i=1}^{n} \left( X + \frac{\beta - \delta \alpha_i \varepsilon'_i}{\alpha - \gamma\alpha_i\varepsilon'_i} Y \right).
$$
Donc il existe une permutation $\sigma$ de $\{1,2,\dots,n\}$ pour laquelle on a, pour $i=1,\dots,n$,
 $$
 \alpha_{\sigma(i)} \varepsilon_{\sigma(i)} 
(\gamma \alpha_i \varepsilon'_i -\alpha)=\beta-\delta\alpha_i \varepsilon'_i.
 $$ 
Soit $i$ un indice dans $\{2,\ldots,n\}$ tel que $\alpha_i\varepsilon'_i\not=\alpha_1$ (rappelons que $\varepsilon'_1=1$). Nous allons vérifier que $\varepsilon'_i$ appartient à un ensemble fini indépendant de $(\alpha, \beta, \gamma, \delta, \eta)$. 
Soit $j$ un autre indice dans $\{2,\ldots,n\}$ tel que les trois nombres 
$\alpha_1, \alpha_i\varepsilon'_i, \alpha_j\varepsilon'_j$ soient deux à deux distincts. 
 
Comme $ f_{\uvarepsilon}(1,0)=1$ et $f_{\uvarepsilon'}(\alpha\delta-\beta\gamma,0)=(\alpha\delta-\beta\gamma)^n$, on a 
$$
f_{\uvarepsilon'}(\alpha,\gamma) =\eta^{-1} \in\OS^\times
 \quad\hbox{et}\quad
f_{\uvarepsilon}(\delta,-\gamma) =\eta (\alpha\delta-\beta\gamma)^n \in\OS^\times.
$$
Par définition de $\calE^\star$, il en résulte que l'on a $\alpha\gamma=\delta\gamma=0$. 
 
 Si $(\alpha,\delta)=(0,0)$, on a $\beta\gamma\not=0$ et 
 $$
 \alpha_{\sigma(i)} \varepsilon_{\sigma(i)} 
\alpha_i \varepsilon'_i =
\alpha_{\sigma(1)} \varepsilon_{\sigma(1)} 
\alpha_1,
 $$
ce qui montre qu'il y a au plus $n!$ valeurs possibles pour $\varepsilon'_i$. 

Il reste à considérer le cas où $\gamma=0$. 
Les trois équations
 $$ 
 \begin{cases}
&\hbox{$\alpha \alpha_{\sigma(1)} \varepsilon_{\sigma(1)} 
- \delta\alpha_1 +\beta=0$}
\\
&\hbox{$\alpha \alpha_{\sigma(i)} \varepsilon_{\sigma(i)} 
 - \delta\alpha_i \varepsilon'_i +\beta=0$}
\\
&\hbox{$\alpha \alpha_{\sigma(j)} \varepsilon_{\sigma(j)} 
- \delta\alpha_j \varepsilon'_j +\beta=0$}
\end{cases}
 $$
montrent que $\alpha_{\sigma(1)}\varepsilon_{\sigma(1)}$, $\alpha_{\sigma(i)}\varepsilon_{\sigma(i)}$ et $\alpha_{\sigma(j)}\varepsilon_{\sigma(j)}$ sont deux à deux distincts. 
 On élimine $\alpha$, $\beta$ et $\delta$ entre ces trois équations, 
 ce qui donne l'équation aux $S$--unités {\it à la Siegel}
 $$
(\alpha_{\sigma(i)} \varepsilon_{\sigma(i)}-\alpha_{\sigma(j)} \varepsilon_{\sigma(j)}) \alpha_1 
+(\alpha_{\sigma(j)} \varepsilon_{\sigma(j)}-\alpha_{\sigma(1)} \varepsilon_{\sigma(1)}) \alpha_i \varepsilon'_i 
+(\alpha_{\sigma(1)} \varepsilon_{\sigma(1)}-\alpha_{\sigma(i)} \varepsilon_{\sigma(i)}) \alpha_j \varepsilon'_j =0.
$$
On déduit du théorème $\ref{Theoreme:Evertse}$ que $\varepsilon'_i$ appartient à un ensemble fini, ce qu'il fallait démontrer. De plus, comme cette dernière équation aux unités n'a que trois termes, le résultat peut être démontré de façon effective: on peut donner une borne pour les hauteurs des $\uvarepsilon'\in\calE\setminus\calE^\star$ tels que les formes $f_{\uvarepsilon}$ et $f_{\uvarepsilon'}$ soient $S$--équivalentes. 

\end{proof}


\subsection*{Remerciements} 
\begin{small}
Ce projet de recherche est né lors de séances de jogging sur le bord des plages de Rio.
Les auteurs expriment leur reconnaissance à ceux qui ont rendu possibles, agréables et productifs leurs séjours communs au Brésil (Instituto Nacional de Matematica Pura e Applicada (IMPA)), au Népal (Université de Tribhuvan et Université de Kathmandu) et en Inde (Congrès International des Mathématiciens ICM 2010)
où ces recherches ont été entreprises et concrétisées.
De pertinentes remarques d'Isao Wakabayashi sur une version préliminaire de ce texte nous ont été utiles. 
Gaël Rémond nous a aidés à faire le lien (qui nous avait été indiqué par un arbitre anonyme) avec le théorème de Vojta. 
\end{small}

\bigskip
\goodbreak

 \noindent
{\sc    \large Références}

\end{document}